\newcommand{\rr}{\mathbb{R}}
\newcommand{\nn}{\mathbb{N}}
\renewcommand{\.}{\cdot}
\newcommand{\e}{\varepsilon}
\newcommand{\test}{\varphi}
\newcommand{\diver}{\text{div\,}}
\newcommand{\n}{\nabla}
\newcommand{\Y}{\mathcal{Y}}
\newcommand{\J}{\mathcal{J}}
\newcommand{\xe}{\Xi_{\varepsilon} }
\newcommand{\xnull}{\Xi_0}
\newcommand{\abs}[1]{\left|#1\right|}
\DeclareMathOperator*{\essinf}{ess\,inf}
\newtheorem{theorem}{Theorem}[section]
\newtheorem{proposition}[theorem]{Proposition}
\newtheorem{lemma}[theorem]{Lemma}
\newtheorem{corollary}[theorem]{Corollary}
\theoremstyle{definition}
\newtheorem{definition}[theorem]{Definition}
\newtheorem{assumption}[theorem]{Assumption}
\newtheorem{problem}{Problem}
\theoremstyle{remark}
\newtheorem{remark}[theorem]{Remark}
\newtheorem{example}[theorem]{Example}
\renewcommand{\algorithmicrequire}{\textbf{\textsf{input:}}}
\renewcommand{\algorithmicensure}{\textbf{\textsf{output:}}}
\begin{document}

\title[Optimal Design of QDD in the semi-classical limit]{The semi-classical limit of an optimal design problem for the stationary quantum drift-diffusion model}
\author{Ren\'e Pinnau, Sebastian Rau, Florian Schneider and Oliver Tse}
\keywords{Optimal control, quantum drift diffusion model, semiclasscial limit, Gamma-convergence, gradient method}
\subjclass[2010]{35B40, 35J50, 35Q40, 49J20, 49K20}
\begin{abstract}
 We consider an optimal semiconductor design problem for the quantum drift diffusion (QDD) model in the semiclassical limit. The design question is formulated as a PDE constrained optimal control problem, where the doping profile acts as control variable. The existence of minimizers for any scaled Planck constant allows for the investigation of the corresponding sequence. Using the concepts of Gamma-convergence and equi-coercivity we can show the convergence of minima and minimizers. Due to the lack of uniqueness for the state system and optimization problem, it was necessary to establish a new result for the QDD model ensuring the existence of a sequence of quantum solutions converging to an isolated classical solution. As a by-product, we obtain new insights into the regularizing property of the quantum Bohm potential. Finally, we present the numerical optimization of a MESFET device underlining our analytical results.  
\end{abstract}  

\maketitle

\section{Introduction}
 Semiconductors are part of most electrical devices and are used to manufacture different components such as transistors, photovoltaic cells and diodes. Transistors switch or amplify electrical signals, while photovoltaic cells convert the energy of light, into electricity, and a diode lets an electrical signal pass only in one direction, which is utilized in light-emitting-diodes (LEDs), see \cite{Yacobi, Muller2003, Sze1985} for an overview.

 Since electrical properties of a semiconductor are in between those of an isolator and a conductor, they can be manipulated by implanting impurities into the semiconductor crystal, i.e., by embedding crystals that either have a lack (so-called \textit{p-doping}) or a surplus (\textit{n-doping}) of electrons. Therewith one can influence the conductivity and crystalline structure of the semiconductor. 
The task of an engineer in the semiconductor industry is to construct a semiconductor with the desired properties at reasonable development costs. For instance, one could desire to obtain a low leakage current in the off-state to maximize battery lifetime, and a high driving current in the on-state \cite{SSP98,SSP99}. Due to the ongoing miniaturization of semiconductor devices, the need to include quantum effects arises. Unfortunately, the direct simulation of microscopic quantum models are computationally expensive, and as such, optimization with these models becomes a complicated task. Nevertheless, first steps in this direction were done (see, e.g., \cite{borzi12} and the references therein).

So far, the mathematical semiconductor design has focussed on classical macroscopic semiconductor model hierarchy, i.e., the  stationary drift-diffusion equations \cite{BurPin, HinPin, HinPin01}, as well as on the energy transport model \cite{DraPin,drago2013}. These macroscopic models, in combination with modern optimization algorithms, proved to be a reliable tool for the optimal design of semiconductor doping profiles. 

Based on this experience it is natural to also exploit the hierarchy of macroscopic semiconductor quantum models. Here, one uses the quantum drift diffusion model \cite{AbdUnt,AncIaf, pinnaureviewTQDD,pinnaunumTQDD} as well as the quantum energy transport model \cite{juengelquasiQHD,degondgallegomehats2,jungel2011simplified}. First analytical and numerical results concerning corresponding optimization problems can be found \cite{UntVol,Schn11,PinRauSch,raudiss}. 

Since the quantum drift-diffusion model only differs from the drift-diffusion model by an additional term $\e^2 \frac{\Delta \sqrt{n}}{\sqrt{n}}$, known as the {\em Bohm potential} , with $n$ being the electron density and $\e^2$ denoting the squared scaled Planck constant, it is natural to investigate the so-called semi-classical limit $\e \rightarrow 0$, i.e., the transition from quantum to classical regime. This limit was first investigated in \cite{AbdUnt}. An extension of this model to self-gravitating particles is given in \cite{pinnautse2014}. These results state that every sequence of solutions to the quantum drift-difusion equations contains a subsequence that converges weakly in $H^1(\Omega)$ towards a solution of the drift-diffusion equations. 

Naturally, the question arises if such a result can be also expected for the corresponding semiconductor design problem. The numerical results in \cite{raudiss,Schn11} indicate that this limit should also hold in this case. Nevertheless, one encounters severe analytical problems, since neither the quantum drift diffusion model nor the corresponding optimal design is uniquely solvable.

For this cause, we will use the concept of $\Gamma$--convergence as described in \cite{DalMas}. Our strategy is the following: We begin by showing that the family of characteristic functions over the set of solutions to the quantum drift-diffusion equations $\Gamma$--converge to the characteristic function of a set of well-behaved solutions to the drift-diffusion equations. By including the characteristic functions into the cost functionals and assuming a special structure of the cost functionals, we obtain the $\Gamma$--convergence of the family of functionals in the semi-classical limit. Together with the equi-coercivity of the functionals, we then obtain the convergence of minima and minimizers.

The crucial part is here the so-called limsup-condition, which requires to find for a classical solution, a sequence of solutions to the quantum model converging to this classical solution. This cannot be deduced from the results in \cite{AbdUnt} and is nontrivial due to the lack of uniqueness. Here, we show for the first time that for an isolated classical solution, there exists, indeed, a sequence of quantum solutions converging to it. This gives also new insights into the regularizing behaviour of the quantum Bohm potential (cf.~\cite{pinnautse2014}, see also \cite{tse2014}).  

The manuscript is organized as follows. In Section 1 we introduce the state system and state the corresponding existence and regularity results. Then, the optimal control problem is formulated in Section 2 and the existence of minimizers is shown. The basic concept of $\Gamma$--convergence is briefly outlined in Section~3, followed by its application to our optimal control problem in Section 4. This allows us to show, in Section 5, the convergence of minimizers and minima. Finally, these analytical results are underlined by several numerical examples in Section 6 and concluding remarks are given in Section 7. In the Appendices we present some technical results necessary for the proofs in previous sections.

\section{State System}

Before stating the quantum drift-diffusion equations, we impose the following assumptions on the domain, its boundary and the boundary data (cf.~\cite{AbdUnt,HinPin,PinUnt,UntVol}).

\begin{assumption} 
\label{assdomainGAMMA}
\begin{enumerate}
\item Let $\Omega \subset \rr^d$, $d\in \{1,2,3\}$ be a Lipschitz domain. The boundary $\partial \Omega$ splits into two disjoint parts $\Gamma_N$, $\Gamma_D$. The set $\Gamma_D$ is assumed to closed and have non-vanishing ($d\!-\!1$)-dimensional Lebesgue measure.
\item Let $\Gamma_D= \bigcup^M_{i=1} \Gamma_D^i$, $M\geq 2$ and $\text{dist}\,(\Gamma_D^{i}, \Gamma_D^{j} ) > 0$ for $i \neq j$.
\item Let $\rho_D$, $V_D$, $S_D\in H^1(\Omega) \cap L^{\infty}(\Omega)$ with $\inf_\Omega \rho_D > 0$.
\end{enumerate}
\end{assumption}
\begin{remark}
The assumption above requires that any two Dirichlet boundaries $\Gamma^{i}_D$ and $\Gamma^{j}_D$, $i\ne j$ are separated by an insulating part. From the physical point of view this requirement is reasonable since it prevents short-circuiting.
\end{remark}

The quantum drift-diffusion equations are given by
\begin{subequations}\label{QDD}
\begin{align}
-\e^2 \Delta \rho  + \rho\big(h(\rho^2) +V-S\big) & = 0, \\
\label{eqV}
-\lambda^2 \Delta V & = \rho^2 - C,\\
-\diver(\rho^2 \nabla S) & = 0 .
\end{align}
on $\Omega$ with the boundary data
\begin{align}
\rho = \rho_D, \quad V = V_D, \quad S = S_D \quad \text{ on } \Gamma_D,  \\
\partial_\nu \rho =  \partial_\nu V = \rho^2\partial_\nu S = 0 \qquad \text{ on } \Gamma_N,
\end{align}
\end{subequations}
where $\nu$ denotes the outer normal, the variable $\rho$ denotes the square root of the {\em electron density} $n$, i.e., $\rho :=\sqrt{n}$, $S$ the {\em quasi-Fermi potential} and $V$ the {\em electrostatic potential} induced by the electron density. Throughout this paper, the doping profile $C$ is our control parameter. 

\begin{remark}\label{boundarydata}
Physically relevant boundary conditions were derived and used, e.g., in \cite{deFalco05,Mar04,UntVol} to ensure charge neutrality and thermal equilibrium at the boundary.
They are given by
\begin{align*}
\rho_D = \sqrt{C}, \quad
 V_D = - \log\big(\rho_D^2/\delta_c^2 \big) + U, \quad
 S_D = \log \big(\rho_D^2/\delta_c^2 \big) + U,
\end{align*}
where $U$ is the applied voltage at the contacts of a semiconductor, and $\delta_c$ denotes the scaled intrinsic carrier concentration.
\end{remark}

The enthalpy function $h(t)$ accounts for electron-electron interactions and is required to satisfy the following assumption (cf.~\cite{unterreiter}).

\begin{assumption}
\label{gammaQDD:assenthalpyasylimit}
Let $h \in \mathcal{C}^1(\rr_+;\rr)$ be strictly monotone increasing with the derivative $h'$ bounded away from zero, and $h(t) \in \mathcal O(t^{\frac{4}{3}})$ for $t \rightarrow \infty$. Furthermore, for any $t\in\rr_+$, let the function $r_t(\eta)=h(t+\eta)-h(\eta)-h'(t)\eta$ satisfy
\[
 |r_t(\eta)-r_t(\xi)| \le L\delta|\eta-\xi|,
\]
for any $\eta,\xi\in\rr$ with $|\eta|,|\xi|\le\delta$ for some constants $L>0$ and $\delta>0$.
\end{assumption}

\begin{example}
An enthalpy function that satisfies Assumption \ref{gammaQDD:assenthalpyasylimit} is given by
\[
 h(t) := \begin{cases} \log(t), & t \leq t_0,\\ g(t), & t \geq t_0, \end{cases}\quad \text{ for } t_0,
\]
where $g$ satisfies the above assumption. Furthermore, the interpolating inequalities $g(t_0)=\log(t_0)$ and $g'(t_0)t_0=1$ need to be satisfied. 
\end{example}

\begin{remark}
The expression $\log(t)$ is often used as an interaction term for low densities. However, an asymptotic expansion of exchange-correlation terms based on Fermi--Dirac statistics yields (cf.~\cite{AbdUnt,JueAsymptotic})
\[
h(t)= \mathcal{O}(t^{\frac{4}{3}}) \quad\text{for}\quad t\to\infty.
\]
This  expansion basically means that the interaction term grows with a certain speed, which is faster than the logarithmic expression for low densities. This property is essential in Section~\ref{sec:gammasemiclassicallimit} when using a Stampaccia argument to derive the uniform boundedness in $\e>0$ for the potential $V$.
\end{remark}

In the sequel we will use the following shorthand notations
\begin{align*}
H^1_0(\Omega \cup \Gamma_N) &:= \overline{ C^{\infty}_c(\Omega \cup \Gamma_N) }^{\| \. \|_{H^1(\Omega)} },\\
\Y_0 & := H^1_0(\Omega \cup \Gamma_N) \cap L^{\infty}(\Omega) , \qquad \| \. \|_{\Y_0}  := \| \. \|_{H^1(\Omega)} + \| \. \|_{L^{\infty}(\Omega)}, \\
\Y_1 & := \rho_D + \Y_0, \quad \Y_2 := V_D + \Y_0, \quad \Y_3 := S_D + \Y_0, \\
\Y & :=  \Y_1\times \Y_2\times \Y_3,
\end{align*}
and the admissible set of controls
\[
 \mathcal{C} := \{ C \in H^1(\Omega)\;|\; C = C_{\text{ref}} \; \text{on}\; \Gamma_D \},
\]
with a given reference doping profile $C_\text{ref}\in H^1(\Omega)\cap L^\infty(\Omega)$.
\subsection{Prelimenary results}
It is convenient to write system (\ref{QDD}) as a single equation with the help of solution operators. 

To incorporate the inhomogeneous Poisson equation (\ref{eqV}), we define the solution operator $\Phi: L^2(\Omega) \mapsto \Y_0;\,f\mapsto\Phi[f]$ as the unique weak solution of
\[
-\lambda^2  \Delta \Phi  = f \quad \text{in } \Omega, \quad 
\Phi = 0 \quad\text{on } \Gamma_D, \quad
\partial_\nu\Phi = 0 \quad\text{on } \Gamma_N,
\]
and $\Phi_e \in \Y_2$ as the unique weak solution of
\[
-\lambda^2 \Delta \Phi_e = 0 \quad \text{in } \Omega, \quad
\Phi_e = V_D \quad\text{on } \Gamma_D, \quad 
\partial_\nu \Phi_e = 0 \quad \text{on } \Gamma_N.
\]
Using the superposition principle, we can write $V = \Phi_V[\rho^2 - C] := \Phi[\rho^2 - C] + \Phi_e$, with $\Phi_V\colon L^2(\Omega)\to \Y_2$.

Likewise, the Fermi potential may be written as $S = \Phi_S[\rho^2]$ where the solution operator $\Phi_S: L^\infty(\Omega) \rightarrow \Y_3$ is defined as the unique weak solution of
\[
-\diver(\rho^2 \n \Phi_S) = 0 \quad \text{in } \Omega,\quad
\Phi_S = S_D \quad \text{on } \Gamma_D,\quad
\rho^2\partial_\nu \Phi_S = 0 \quad \text{on } \Gamma_N,
\]
for $\rho \geq \underline{\rho}$ almost everywhere in $\Omega$ for some constant $\underline{\rho}>0$.

The solution operators $\Phi, \Phi_e$ and $\Phi_S$ are well defined due to standard elliptic theory \cite{gil}. Consequently, we may write the weak solutions of system (\ref{QDD}) as solutions of the variational equation
\begin{align}\label{weakformulation1}
 \rho\in \Y_1:\quad \langle e_{\e}(\rho,C), \test \rangle = 0\quad\quad \forall\,\test\in H_0^1(\Omega\cup\Gamma_N).
\end{align}
where the nonlinear operator $e_{\e}\colon \Y_1 \times \mathcal{C} \rightarrow H_0^1(\Omega\cup\Gamma_N)^*$ is given by
\[
\langle e_{\e}(\rho,C), \test \rangle := -\e^2 \int \nabla \rho \.\nabla \test\, dx+ \int \rho \big( h(\rho^2) + \Phi_V[\rho^2 - C] - \Phi_S[\rho^2]\big)\test\,dx 
\]
We recall existence results and a priori estimates to (\ref{QDD})  shown in \cite{AbdUnt, HinPin01, UntVol}:

\begin{proposition}
\label{existenceweaksol}
Let Assumption \ref{assdomainGAMMA}  be satisfied. Then, for any $C \in \mathcal{C}$ and every data $(\rho_D,V_D,S_D)$ with
\[
 1/K\le \rho_D \le K\quad\text{a.e.~in}\;\;\Omega, \quad \|V_D\|_{L^\infty(\Omega)}, \|S_D\|_{L^\infty(\Omega)}\le K
\]
for some $K\ge 1$, there exists $(\rho,V, S) \in \Y$ with $V := \Phi_V[\rho^2-C]$ and $S := \Phi_S[\rho^2]$ satisfying (\ref{weakformulation1}), and the estimates
\begin{align}\label{aprioriestimate}
 \rho \ge 1/L\quad\text{a.e.~in}\;\; \Omega,\quad \| \rho\|_{\Y_1} + \| V \|_{\Y_2} + \| S \|_{\Y_3} \leq L,
\end{align}
for some constant $L\ge 1$ depending only on $\Omega$, $K$, and $\|C\|_{L^p(\Omega)}$, where the compact embedding $H^1(\Omega)\hookrightarrow L^p(\Omega)$ holds true.
\end{proposition}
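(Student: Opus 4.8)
The plan is to recast the coupled system (\ref{QDD}) as a single fixed-point problem for $\rho$ and solve it by Schauder's theorem, letting the a priori estimates (\ref{aprioriestimate}) supply both the invariant set and the compactness. Since the reduced operator $e_\e$ in (\ref{weakformulation1}) already eliminates $V$ and $S$ through $\Phi_V$ and $\Phi_S$, I would first decouple the nonlinearity: fix $\sigma\in L^\infty(\Omega)$ with $1/L\le\sigma\le L$, set $V:=\Phi_V[\sigma^2-C]$ and $S:=\Phi_S[\sigma^2]$ (the lower bound on $\sigma$ is exactly what makes the degenerate operator $\Phi_S$ well defined), and then solve
\[
-\e^2\Delta\rho+\rho\bigl(h(\rho^2)+V-S\bigr)=0 \quad\text{in }\Omega,\qquad \rho=\rho_D\text{ on }\Gamma_D,\quad \partial_\nu\rho=0\text{ on }\Gamma_N .
\]
This is the Euler--Lagrange equation of
\[
\J(\rho)=\frac{\e^2}{2}\int_\Omega\abs{\n\rho}^2\,dx+\frac12\int_\Omega H(\rho^2)\,dx+\frac12\int_\Omega \rho^2\,(V-S)\,dx,\qquad H'=h,
\]
which is coercive and weakly lower semicontinuous on $\Y_1$: the map $\rho\mapsto H(\rho^2)$ is convex because $H$ is convex and increasing by Assumption \ref{gammaQDD:assenthalpyasylimit}, while the indefinite term is a bounded lower-order perturbation controlled by $\|V\|_{L^\infty}+\|S\|_{L^\infty}$. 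The direct method yields a minimizer $\rho=:T[\sigma]$ solving the $\rho$-equation; a monotone truncation of the nonlinearity, justified a posteriori by the pointwise bounds below, makes this solution unique, so that $T$ is single-valued.

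Next I would establish the estimates (\ref{aprioriestimate}) by testing and maximum-principle arguments. Testing the $\rho$-equation with $(\rho-L)^+$ and with $(1/L-\rho)^+$, and using $\rho_D\in[1/K,K]$ together with the strict monotonicity of $h$ and the sign structure of $V-S$, yields the pointwise bounds $1/L\le\rho\le L$; the lower bound is the essential point, since it is what keeps $\Phi_S[\rho^2]$ meaningful and lets the scheme close. The $H^1$ estimate follows by testing with $\rho-\rho_D\in H^1_0(\Omega\cup\Gamma_N)$ and absorbing, and the $L^\infty$ bound on $V$ — hence on the whole nonlinearity — is obtained by a Stampacchia iteration on the Poisson equation (\ref{eqV}), where the growth hypothesis $h(t)=\mathcal O(t^{4/3})$ enters to control the right-hand side. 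These estimates confine $T$ to the closed convex set $\mathcal S:=\{\sigma:1/L\le\sigma\le L\}\subset L^2(\Omega)$ and show $T(\mathcal S)\subset\mathcal S$.

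Finally, I would verify the remaining hypotheses of Schauder's theorem. Compactness of $T\colon\mathcal S\to\mathcal S$ is inherited from the uniform $H^1$ bound and the compact embedding $H^1(\Omega)\hookrightarrow L^2(\Omega)$; continuity follows from the stability of the elliptic solution operators $\Phi_V,\Phi_S$ under $L^2$-convergence of $\sigma$ bounded away from zero, together with the stability of the minimizer of $\J$. Schauder then produces a fixed point $\rho=T[\rho]$, which by construction solves (\ref{weakformulation1}) with $V=\Phi_V[\rho^2-C]$ and $S=\Phi_S[\rho^2]$ and satisfies (\ref{aprioriestimate}); membership $(\rho,V,S)\in\Y$ is immediate from the bounds. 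The main obstacle I anticipate is the uniform positivity $\rho\ge1/L$: it must persist along the whole iteration to keep $\Phi_S$ non-degenerate, and showing that its constant depends only on $\Omega$, $K$ and $\|C\|_{L^p(\Omega)}$ — and not on $\e$ or on $\sigma$ — requires the careful maximum-principle and Stampacchia analysis that exploits the monotonicity and growth built into Assumption \ref{gammaQDD:assenthalpyasylimit}.
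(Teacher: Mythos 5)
The paper itself does not prove Proposition~\ref{existenceweaksol}; it is recalled from \cite{AbdUnt, HinPin01, UntVol}, where the construction keeps the Poisson equation \emph{self-consistent}: for each fixed $S$ one minimizes the energy (\ref{gammaQDDenergy}), in which $V=\Phi_V[\rho^2-C]$ enters as a nonnegative term that is convex in $n=\rho^2$, and the fixed-point iteration runs over $S$ alone. Your scheme decouples further, freezing \emph{both} $V$ and $S$ through $\sigma$, and this is precisely where it breaks. For $\sigma\in\mathcal S=\{1/L\le\sigma\le L\}$ you do get $\|S\|_{L^\infty(\Omega)}\le K$ uniformly (maximum principle for $-\diver(\sigma^2\n S)=0$), but $\|\Phi_V[\sigma^2-C]\|_{L^\infty(\Omega)}$ genuinely grows like $L^2$ (take $\sigma\equiv L$ and $C=0$). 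The maximum-principle lower bound this yields for $\rho=T[\sigma]$ is $\underline\rho$ with $h(\underline\rho^2)=\essinf S-\esssup V$, i.e.\ for a log-type enthalpy $\underline\rho\approx\exp(-cL^2)$, which lies far below $1/L$; likewise the upper bound needs $h(L^2)\ge K+cL^2$, which Assumption~\ref{gammaQDD:assenthalpyasylimit} ($h'$ bounded below, $h(t)=\mathcal O(t^{4/3})$ only from \emph{above}) does not guarantee. So $T(\mathcal S)\subset\mathcal S$ fails for your symmetric set no matter how large $L$ is: the output lower bound deteriorates in $L$ much faster than $1/L$. This "careful analysis" you defer to is not a technicality but the crux, and it cannot close in the form you set up. It can be repaired either by an asymmetric set $\{\underline m\le\sigma\le\overline m\}$ chosen in the correct order (first $\overline m$, which requires genuinely superlinear growth of $h$ to beat the quadratic growth of $\|V\|_{L^\infty(\Omega)}$ in $\overline m$; then $\underline m$ defined from $\overline m$, since the output lower bound depends only on the input upper bound), or, cleaner, by not freezing $V$ at all: with the Poisson term inside the energy the chain of estimates is noncircular — bound $\|S\|_{L^\infty(\Omega)}$, then the energy infimum by a fixed competitor, then $\|\rho\|_{L^p(\Omega)}$, then $\|V\|_{L^\infty(\Omega)}$ by Stampacchia, then the pointwise bounds — exactly the order used in the proof of Lemma~\ref{lemmawbound} in Appendix~A.

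There is a second gap: the proposition asserts that $L$ depends only on $\Omega$, $K$ and $\|C\|_{L^p(\Omega)}$, in particular \emph{not on} $\e$, and this uniformity is what the entire semi-classical limit argument rests on. Your $H^1$ estimate (test with $\rho-\rho_D$ and absorb) only gives $\e^2\|\n\rho\|_{L^2(\Omega)}^2\le C$, i.e.\ $\|\n\rho\|_{L^2(\Omega)}\lesssim 1/\e$. The $\e$-uniform bound requires the comparison argument of Appendix~A: by minimality of $\rho$ for $E^{\e}_{S}$, one has $\|\n\rho\|_{L^2(\Omega)}\le\|\n\tilde\rho\|_{L^2(\Omega)}$, where $\tilde\rho$ solves the classical auxiliary problem (\ref{QDD:auxiliary1}) with the same $S$, and $\|\n\tilde\rho\|_{L^2(\Omega)}$ is then estimated by differentiating the algebraic relation $h(\tilde\rho^2)=S-\tilde V$; nothing in your proposal produces this. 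Finally, a smaller flaw: weak lower semicontinuity cannot be justified by convexity of $\rho\mapsto H(\rho^2)$, because $H$ is convex but not increasing (for $h=\log$, $H(t)=t\log t-t+1$ decreases on $(0,1)$), so the composition rule does not apply. The correct structural fact — which would also replace your vague "monotone truncation" argument for uniqueness of $T[\sigma]$ — is that the energy is strictly convex as a functional of $n=\rho^2$.
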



\begin{remark}
Solutions of system (\ref{QDD}) may be related to the extrema of the quantum energy defined by the functional 
\begin{align}
\label{gammaQDDenergy}
E^{\e}_{S}(\rho) &:= \e^2 \int_{\Omega} | \n \rho|^2\, dx + \int_{\Omega} H(\rho^2)\, dx + \frac{\lambda^2}{2} \int_{\Omega} \left|\n V\right|^2dx - \int_{\Omega} S \rho^2\, dx
\end{align}
where $H(t) := \int_1^t h(s)\, ds$ and $C \in \mathcal{C}$, $S \in \Y_3$ are given. If $(\rho,V, S) \in \Y$ is the solution from Proposition \ref{existenceweaksol}, then $\rho$ is a minimizer of $E^{\e}_S$ in $\Y_1$. Furthermore, for a given $S$, $\rho$ is the unique minimizer of $E^{\e}_S$ (cf.~\cite{AbdUnt, UntVol}). 
\end{remark}

\begin{remark}
One drawback is that the solution from Proposition~\ref{existenceweaksol} need not be unique. Uniqueness may be assured for small applied voltages $U$ (see~Proposition~\ref{existencesolunique} below) and is essential when formulating an optimization algorithm. However, uniqueness is not required for the purpose of this paper.
\end{remark}

\section{Optimization Problem}
\label{sec:optimization}

We state the optimization problem with a general cost functional $J$ satisfying the following assumptions.
 
\begin{assumption}
\label{assumptioncostgammaQDD}
 Let $J\colon H^1(\Omega) \times \mathcal{C}\to\rr$ denote a cost functional, which is continuously Fr\'echet differentiable with Lipschitz continuous derivatives,  bounded from below, and radially unbounded with respect to the second variable. Furthermore, let $J$ be of separated type, i.e., we can write $J(\rho,C) = J_a(\rho) + J_b(C)$ with $J_a\colon H^1(\Omega) \mapsto \rr$ being weakly continuous and $J_b$ being weakly lower semicontinuous in $H^1(\Omega)$.
 \end{assumption}

\begin{example}
For example, the cost functional 
\begin{align}
\label{cost1}
J_1(\rho,C) := \frac{1}{2} \| \rho^2 - n_d \|_{L^2(\Omega)}^2 + \frac{\gamma}{2} \| \nabla(C - C_{\text{ref}}) \|_{L^2(\Omega)}^2,\quad\gamma>0,
\end{align}
satisfies Assumption~\ref{assumptioncostgammaQDD} due to the compact Sobolev embedding $H^1(\Omega) \hookrightarrow L^p(\Omega)$, $p\in[1,6)$ up to $d=3$. Therefore the tracking type term  is continuous with respect to the weak topology in $H^1(\Omega)$, and the second term is weakly lower semicontinuous since the norm is weakly lower semicontinuous (cf.~\cite{HinPin})
\end{example}

\begin{example}
Another cost functional often used for the optimal design of a semiconductor by tracking the total current on the boundary is (cf.~\cite{BurPin})
\begin{align}
\label{cost2}
J_2(\rho,C) := \frac{1}{2} | I(\rho) - I_d |^2 + \frac{\gamma}{2} \| \nabla(C - C_{\text{ref}}) \|_{L^2(\Omega)}^2,
\end{align}
with total current $I(\rho)$ given by
\[
 I(\rho) := \int_{\Gamma_O} \rho^2 \partial_\nu\Phi_S[\rho^2]\,ds,
\]
where $\nu $ is the outer normal and $\Gamma_O \subset \Gamma_D$. Unfortunately, this functional does not satisfy Assumption~\ref{assumptioncostgammaQDD} due to the boundary integral and the lack of an appropriate embedding for the trace operator.
\end{example}

Now we can formulate the optimization problem:

\begin{problem}\label{optgammaQDD}
Let $J$ satisfy Assumption~\ref{assumptioncostgammaQDD}.
The optimal control problem reads: Find $(\rho_*,C_*) \in \Y_1 \times \mathcal{C}$ such that
\[
J(\rho_*, C_*) = \min_{(\rho,C) \in \Y_1 \times \mathcal{C} } J(\rho,C) \quad
\text{s.t.}\quad e_{\e}(\rho,C) = 0  \quad \text{in}\;\; H_0^1(\Omega\cup\Gamma_N)^*.
\]
\end{problem}

The existence of a minimizer for any cost functional satisfying Assumption~\ref{assumptioncostgammaQDD} is a consequence of the existence results and a priori bounds of Proposition $\ref{existenceweaksol}$. Arguments used in the proof mimic those in \cite{UntVol}. 

\begin{theorem}\label{existenceoptcont}
There exists at least one solution $(\rho_*,C_*) \in \Y_1 \times \mathcal{C}$ to Problem~\ref{optgammaQDD}.
\end{theorem}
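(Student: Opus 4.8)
The plan is to apply the direct method of the calculus of variations. Since Proposition~\ref{existenceweaksol} guarantees, for every admissible control $C\in\mathcal{C}$, the existence of a feasible state $\rho$, the feasible set of Problem~\ref{optgammaQDD} is nonempty; combined with the lower boundedness of $J$ from Assumption~\ref{assumptioncostgammaQDD}, the infimum $m:=\inf J$ is finite. I would then fix a minimizing sequence $(\rho_n,C_n)\in\Y_1\times\mathcal{C}$ with $e_\e(\rho_n,C_n)=0$ and $J(\rho_n,C_n)\to m$.

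First I would extract uniform bounds. The radial unboundedness of $J$ in the control variable together with the boundedness of $\{J(\rho_n,C_n)\}$ forces $\{C_n\}$ to be bounded in $H^1(\Omega)$, hence, by the compact embedding $H^1(\Omega)\hookrightarrow L^p(\Omega)$, uniformly bounded in $L^p(\Omega)$. Since each $\rho_n$ is a weak solution for the control $C_n$, the a priori estimate~(\ref{aprioriestimate})---which holds for every weak solution and depends on the data only through $\|C_n\|_{L^p(\Omega)}$---yields a uniform bound $\|\rho_n\|_{\Y_1}\le L$ together with $1/L\le\rho_n\le L$ a.e. Passing to a subsequence (not relabelled), I obtain $C_n\rightharpoonup C_*$ and $\rho_n\rightharpoonup\rho_*$ in $H^1(\Omega)$, and by compactness $C_n\to C_*$, $\rho_n\to\rho_*$ strongly in $L^p(\Omega)$ and a.e.\ in $\Omega$. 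The admissible set $\mathcal{C}$ is a closed affine subspace of $H^1(\Omega)$, hence weakly closed, so $C_*\in\mathcal{C}$; similarly the constraints defining $\Y_1$ are preserved, and the pointwise bounds $1/L\le\rho_*\le L$ survive in the limit, so $\rho_*\in\Y_1$.

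The main obstacle is to show that the limit is feasible, i.e.\ $e_\e(\rho_*,C_*)=0$, which requires passing to the limit in the \emph{nonlinear} weak formulation~(\ref{weakformulation1}). The diffusive term $-\e^2\int\n\rho_n\.\n\test\,dx$ converges by weak $H^1$ convergence. For the enthalpy term, the uniform bounds $1/L\le\rho_n\le L$, the continuity of $h$, and the growth condition $h(t)\in\mathcal O(t^{4/3})$ from Assumption~\ref{gammaQDD:assenthalpyasylimit} give $\rho_n h(\rho_n^2)\to\rho_* h(\rho_*^2)$ strongly in $L^2(\Omega)$ via a.e.\ convergence and dominated convergence. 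For the Poisson term I would use that $\rho_n^2-C_n\to\rho_*^2-C_*$ in $L^2(\Omega)$ (since $\rho_n\to\rho_*$ in $L^4$ and $C_n\to C_*$ in $L^2$) and that the linear solution operator $\Phi_V$ is continuous from $L^2(\Omega)$ into $\Y_2$. The delicate term is the Fermi potential $\Phi_S[\rho_n^2]$, since $\Phi_S$ is a \emph{nonlinear} solution operator whose coefficient $\rho_n^2$ degenerates only if $\rho_n\to 0$; here the uniform lower bound $\rho_n\ge 1/L$ guarantees uniform ellipticity, so a continuity lemma for $\Phi_S$ (with respect to $L^p$/a.e.\ convergence of the coefficient) yields $\Phi_S[\rho_n^2]\to\Phi_S[\rho_*^2]$ in $H^1(\Omega)$, and hence in $L^2(\Omega)$. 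Multiplying each converging factor by $\rho_n$ (strongly convergent in $L^2$) and passing to the limit term by term then gives $\langle e_\e(\rho_*,C_*),\test\rangle=0$ for all $\test\in H^1_0(\Omega\cup\Gamma_N)$.

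Finally I would invoke the separated structure of $J$. Weak continuity of $J_a$ gives $J_a(\rho_n)\to J_a(\rho_*)$, while weak lower semicontinuity of $J_b$ gives $\liminf_n J_b(C_n)\ge J_b(C_*)$, so that
\[
J(\rho_*,C_*)=J_a(\rho_*)+J_b(C_*)\le\liminf_{n\to\infty}\big(J_a(\rho_n)+J_b(C_n)\big)=m.
\]
As $(\rho_*,C_*)$ is feasible, $J(\rho_*,C_*)\ge m$, whence equality holds and $(\rho_*,C_*)$ solves Problem~\ref{optgammaQDD}. I expect the continuity of the nonlinear operator $\Phi_S$ under mere $L^p$/a.e.\ convergence of its coefficient---relying crucially on the uniform positive lower bound for $\rho_n$---to be the technical crux, presumably isolated in a separate lemma.
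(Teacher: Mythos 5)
Your proposal is correct, and its overall architecture coincides with the paper's proof: direct method, uniform bound on the controls from radial unboundedness, uniform bound on the states from the a priori estimate (\ref{aprioriestimate}), extraction of weakly convergent subsequences, passage to the limit in the weak formulation (\ref{weakformulation1}), and finally weak (lower semi)continuity of $J$ via its separated structure. The one point where you genuinely diverge --- and which you correctly flag as the crux --- is the Fermi-potential term: you invoke an unproven strong-continuity lemma, $\Phi_S[\rho_k^2]\to\Phi_S[\rho_*^2]$ in $H^1(\Omega)$, for a.e.\ convergent, uniformly elliptic coefficients. The paper does not need any such lemma. It uses only that $(S_k)=(\Phi_S[\rho_k^2])$ is bounded in $H^1(\Omega)$ by (\ref{aprioriestimate}), extracts a weak limit $S_k\rightharpoonup S_*$, and identifies $S_*=\Phi_S[\rho_*^2]$ by moving the coefficient onto the test function,
\[
(\rho_k^2\n S_k,\n\test)=(\n S_k,\rho_k^2\n\test)\to(\n S_*,\rho_*^2\n\test),
\]
which follows since $\rho_k^2\n\test\to\rho_*^2\n\test$ strongly in $L^2(\Omega)$ (dominated convergence, using the uniform $L^\infty$ bound) while $\n S_k\rightharpoonup\n S_*$ only weakly; uniqueness of the weak solution to the uniformly elliptic $S$-equation then closes the identification. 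Your lemma is in fact true --- after this identification, testing the equation for $S_k$ with $S_k-S_*$ and using $\rho_k\ge 1/L$ upgrades the convergence to strong $H^1(\Omega)$ --- but it is stronger than what the proof requires: to pass to the limit in $\int_\Omega\rho_k S_k\test\,dx$, weak $H^1$ convergence plus the compact embedding $H^1(\Omega)\hookrightarrow L^2(\Omega)$ already suffice. So your route costs an extra energy argument and buys nothing; the paper's weak--strong pairing is the more economical path. Note also that both your argument and the paper's rely on the same tacit reading of Proposition~\ref{existenceweaksol}, namely that the bound (\ref{aprioriestimate}) holds for \emph{every} feasible pair (in particular for each member of the minimizing sequence), not merely for the particular solutions constructed there.
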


\begin{proof}
Since $J$ is bounded from below we can define 
\[
 j:= \inf_{(\rho,C)\in\Y_1 \times \mathcal{C}} J(\rho,C) > -\infty.
\] 
Now we choose a minimizing sequence $(\rho_k,C_k)$. From the radial unboundedness of $J$ with respect to $C$ we get a uniform bound for $(C_k)\subset H^1(\Omega)$. 
Hence, the a priori estimate (\ref{aprioriestimate}) yields the uniform boundedness of $(\rho_k)\subset\Y_1$. We can therefore deduce the existence of a subsequence, again denoted by $(\rho_k,C_k)$, and a $(\rho_*,C_*) \in \Y_1 \times \mathcal{C}$ such that
\[
\rho_k \rightharpoonup \rho_* \quad \text{in}\;\; H^1(\Omega),\quad \rho_k \rightharpoonup^* \rho_* \quad \text{in}\;\; L^\infty(\Omega),\quad C_k \rightharpoonup C_* \quad \text{in}\;\; H^1(\Omega).
\]
These convergences are sufficient to pass to the limit in the weak formulation. We begin with the term $\rho^2$. From the weak convergence in $H^1(\Omega)$ we deduce for $p<6$ the strong convergence of a subsequence of $(\rho_k)$ in $L^p(\Omega)$, and consequently yet another subsequence (denoted again by $(\rho_k)$) that converges a.e.~in $\Omega$, i.e.,
\[
 \rho_k(x)\to \rho_*(x)\quad\text{a.e.~in}\;\;\Omega.
\]
Therefore, passage to the limit in the nonlinear terms $\rho^2$ and $\rho h(\rho^2)$ may be shown by a simple application of the Lebesgue dominated convergence.
Also in the other terms, the above convergences are sufficient to pass to the limit. Indeed, by defining $V_k := \Phi_V[\rho_k^2 - C_k]$, and $S_k := \Phi_S[\rho_k^2]$, we obtain from estimate (\ref{aprioriestimate})
\[
S_k \rightharpoonup S_* \quad \text{in}\;\; H^1(\Omega),\quad V_k \rightharpoonup V_* \quad \text{in}\;\; H^1(\Omega)
\]
for some $ V_* \in \Y_2$ and $S_* \in \Y_3$. The continuity and linearity of $\Phi_V$ allows to identify $V_*=\Phi_V[\rho_*^2-C]$. As for $S_*$, we use the Lebesgue dominated convergence again to recover the strong convergence of $\rho_k^2\n \test \to \rho_*^2\n \test$ in $L^2(\Omega)$ for any $\test\in H^1(\Omega)$, and consequently the convergence
\[
 (\rho_k^2\n S_k,\n \test) = (\n S_k,\rho_k^2\n \test) \to (\n S_*,\rho_*^2\n \test)\quad\text{for}\;\;k\to\infty,
\]
for all $\test\in H^1(\Omega)$. This implies that $S_*=\Phi_S[\rho_*^2]$. Altogether, we obtain
\[
0 = \langle e_{\e}(\rho_k,C_k), \test \rangle \to \langle e_{\e}(\rho_*,C_*), \test \rangle \quad\text{for all } \test\in H_0^1(\Omega\cup \Gamma_N).
\]
By standard arguments due to the weak lower semicontinuity of $J$, we finally conclude that $(\rho_*,C_*)\in \Y_1\times \mathcal{C}$ solves the minimization problem (\ref{optgammaQDD}).
\end{proof}

\begin{remark}
 As a result of Theorem~\ref{existenceoptcont}, we obtain, for every $\e>0$, the existence of at least one triple $(\rho_\e,V_\e,S_\e)\in \Y$, which resolves Problem~\ref{optgammaQDD}.
\end{remark}

\section{$\Gamma$--Convergence for the Semi-classical Limit}
\label{sec:gammasemiclassicallimit}

As mentioned earlier, we will use the concept of $\Gamma$--convergence to prove the convergence of minima and minimizers in the semi-classical limit. An introduction into this topic may be found, e.g., in \cite{Braides,DalMas}. In essence, $\Gamma$--convergence of functionals can be characterized by the following two inequalities, the so-called liminf-inequality and limsup-inequality, as given in the following proposition.

\begin{proposition}[$\Gamma$--convergence of functionals]
\label{defgammaconvsequ}
Let $X$ be a reflexive Banach space with a separable dual and $(F_k)$ be a sequence of functionals from $X$ into $\overline{\rr}$. Then $(F_k)$ $\Gamma$--converges to $F$ if the following two conditions are satisfied:
\begin{enumerate}
\item[\em (i)] For every $x \in X$ and for every sequence $(x_n) \subset X$ weakly converging to $x$:
\begin{align}
\label{liminfinequ}
F(x) \leq \liminf_{k \rightarrow \infty} F_k(x_k).\tag{L-inf}
\end{align}
\item[\em (ii)] For every $x \in X$ there exists a sequence $(x_n)\subset X$ weakly converging to $x$:
\begin{align}
\label{limsupequ}
F(x) \geq \limsup_{k \rightarrow \infty} F_k(x_k).\tag{L-sup}
\end{align}
\end{enumerate}
\end{proposition}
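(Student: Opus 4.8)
The plan is to read conditions (i) and (ii) as the \emph{sequential} incarnation of the neighbourhood-based definition of $\Gamma$--convergence in the weak topology of $X$, and then to invoke the standard equivalence between the sequential and topological formulations found in \cite{DalMas}. Recall that, topologically, $(F_k)$ $\Gamma$--converges to $F$ precisely when the lower and upper $\Gamma$--envelopes
\[
\Gamma\text{-}\liminf_k F_k(x) := \sup_{U\in\mathcal{N}(x)}\liminf_{k\to\infty}\inf_{y\in U}F_k(y), \qquad \Gamma\text{-}\limsup_k F_k(x) := \sup_{U\in\mathcal{N}(x)}\limsup_{k\to\infty}\inf_{y\in U}F_k(y)
\]
agree with $F(x)$ at every $x\in X$, where $\mathcal{N}(x)$ is the family of weak neighbourhoods of $x$. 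Since one always has $\Gamma\text{-}\liminf_k F_k\le\Gamma\text{-}\limsup_k F_k$ pointwise, it is enough to establish the two inequalities $F(x)\le\Gamma\text{-}\liminf_k F_k(x)$ and $\Gamma\text{-}\limsup_k F_k(x)\le F(x)$ for all $x$.

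The decisive ingredient that allows the neighbourhood infima to be computed along sequences is the following: although the weak topology is not metrizable on all of $X$, it \emph{is} metrizable on every norm-bounded subset, because $X$ is reflexive with separable dual (so in particular $X$ is itself separable). Coupling this with the Banach--Steinhaus theorem, which forces every weakly convergent sequence to be norm-bounded, we may confine the whole discussion to a fixed ball, on which weak sequential convergence and weak topological convergence coincide. On such a metrizable (hence first-countable) set the envelopes admit the sequential representations
\[
\Gamma\text{-}\liminf_k F_k(x) = \inf\Big\{\liminf_{k\to\infty}F_k(x_k): x_k\rightharpoonup x\Big\}, \qquad \Gamma\text{-}\limsup_k F_k(x) = \inf\Big\{\limsup_{k\to\infty}F_k(x_k): x_k\rightharpoonup x\Big\},
\]
which is the classical sequential characterisation of \cite{DalMas}.

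With these representations in hand the two hypotheses translate immediately. Condition (i), being valid for \emph{every} weakly convergent sequence $x_k\rightharpoonup x$, shows that each competitor in the infimum defining $\Gamma\text{-}\liminf_k F_k(x)$ satisfies $\liminf_k F_k(x_k)\ge F(x)$, whence $\Gamma\text{-}\liminf_k F_k(x)\ge F(x)$. Condition (ii) furnishes a single recovery sequence $x_k\rightharpoonup x$ with $\limsup_k F_k(x_k)\le F(x)$, so the infimum defining $\Gamma\text{-}\limsup_k F_k(x)$ is bounded above by $F(x)$. Combined with the generic ordering of the envelopes, this yields $\Gamma\text{-}\liminf_k F_k(x)=\Gamma\text{-}\limsup_k F_k(x)=F(x)$ for all $x$, that is, $\Gamma$--convergence.

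I expect the only genuine difficulty to lie in justifying the sequential representation of the envelopes, since the weak topology on an infinite-dimensional $X$ is never first-countable globally and one cannot simply exchange weak neighbourhoods for weakly convergent sequences. The hypotheses of reflexivity and separability of the dual are exactly what repair this defect: they make bounded sets weakly metrizable, and the restriction to such sets is harmless because weakly convergent sequences are automatically bounded (and, for the coercive functionals of interest, only bounded competitors ever matter). Once this reduction is in place, the metric-space theory of \cite{DalMas} applies verbatim and the remaining steps are routine.
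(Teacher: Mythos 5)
The paper itself offers no proof of this proposition: it is recalled from \cite{DalMas} and, within the paper's logic, functions as the \emph{definition} of (sequential) $\Gamma$--convergence with respect to weak convergence, to which equi-coercivity is added separately (Theorem~\ref{corollaryequicoercive}) in order to get convergence of minima. Your proposal attempts something stronger, namely to deduce topological (neighbourhood-based) $\Gamma$--convergence in the weak topology from (i) and (ii), and that implication is false in the stated generality. The gap sits exactly at the step you yourself flag as the ``only genuine difficulty'': metrizability of bounded sets does \emph{not} allow you to ``confine the whole discussion to a fixed ball''. The envelopes involve $\inf_{y\in U}F_k(y)$ over weak neighbourhoods $U$ of $x$, and in an infinite-dimensional space every such $U$ is unbounded (it contains a finite-codimensional affine subspace), so intersecting $U$ with a ball can strictly increase these infima; the competitors $y\in U$ realizing them need not lie on any weakly convergent sequence, and Banach--Steinhaus gives no control over them. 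Your parenthetical remark that ``for the coercive functionals of interest, only bounded competitors ever matter'' is precisely where equi-coercivity is smuggled in --- but coercivity is not a hypothesis of this proposition, and in the paper it is proved later and independently, so the argument as written is circular.

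To see that the implication genuinely fails, take $X$ a separable Hilbert space with orthonormal basis $(e_n)$, let $S=\{\sqrt{n}\,e_n : n\in\nn\}$, and set $F_k=F=\chi_S$ for all $k$ (value $0$ on $S$, $+\infty$ elsewhere). Any weakly convergent sequence in $S$ is norm-bounded, hence takes only finitely many values, so $S$ is sequentially weakly closed and conditions (i), (ii) hold with limit $F$ (use constant recovery sequences). Yet $0$ lies in the weak closure of $S$: given $y_1,\dots,y_m\in X$ and $\e>0$, the set of indices $n$ with $\sqrt{n}\,|\langle e_n,y_i\rangle|\ge\e$ for some $i$ satisfies $\sum_{n\in A}1/n\le \sum_i\|y_i\|^2/\e^2<\infty$, so infinitely many $n$ give $\sqrt{n}\,e_n\in U$ for any basic weak neighbourhood $U$ of $0$. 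Hence $\inf_U F=0$ for every such $U$, the topological $\Gamma$--limit of the constant sequence equals $0$ at the origin, while $F(0)=+\infty$: topological $\Gamma$--convergence to $F$ fails. The equivalence you invoke from \cite{DalMas} between sequential and topological $\Gamma$--limits in the weak topology is a theorem \emph{under equi-coercivity of $(F_k)$}. The repair is therefore either to add equi-coercivity to the hypotheses (after which your reduction to a ball is legitimate and the rest of your argument goes through), or --- as the paper does --- to read (i) and (ii) as the definition of sequential $\Gamma$--convergence, in which case there is nothing to prove here and the topological notion is never needed.
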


We will also need the notion of {\em weak equi-coercivity} for functionals on reflexive Banach spaces, endowed with its weak topology.

\begin{definition}\label{equicoercive}
Let $X$ be reflexive Banach space with a separable dual. A sequence $(F_k)$ is said to be {\em weakly equi-coercive} on X, if for every $t \in \rr$ there exists a weakly compact subset $K_t$ of $X$ such that $\{ F_k \leq t \} \subseteq K_t$ for every $k \in \nn$.
\end{definition}					

Altogether, it holds \cite{Braides}:
\[
\Gamma\text{\em --convergence}  + \text{\em equi-coercivity} \;\Rightarrow \;\text{\em convergence of minima and minimizers.} 
\]
We apply this concept to our semi-classical limit problem. Notice that $X$ as a product of reflexive Banach spaces with separable duals is again a reflexive Banach space with a separable dual. Moreover, for each $\e >0$, we define $\xe$ as the set of all admissible pairs,
\[
\xe := \left\{(\rho,C) \in \Y_1 \times \mathcal{C}\;|\; e_{\e}(\rho,C) = 0\;\;\text{in}\;\;H_0^1(\Omega\cup\Gamma_N)^* \right\},
\]
and let $\chi_{\e}\colon X\to\overline{\rr}$, be its characteristic function, given by
\[
\chi_{\e}(\rho,C) =
\begin{cases}
0 &\text{if } (\rho,C) \in \xe, \\
+\infty &\text{otherwise}.
\end{cases}
\]
To define the set of admissible pairs $\xnull$ for classical solutions, we first make the following assumption:
\begin{assumption}\label{gammaQDD:assclassicalsolutions}
If $(\rho_0,C_0) \in \Y_1 \times \mathcal{C}$ satisfies $e_0(\rho_0,C_0) = 0$, then the solution $(\rho_0,V[\rho_0],S[\rho_0])$ of system (\ref{QDD}) is isolated. 
\end{assumption}

\begin{remark}
 Since we do not assume uniqueness of solutions for the classical drift-diffusion equation, Assumption~\ref{gammaQDD:assclassicalsolutions} is necessary to provide for the invertibility of the linearisation at the point $\rho_0$, which is essential in the proof of Lemma~\ref{QDD:lemmaexistenceconvergentsolutions}.
\end{remark}

In the following, we restrict the classical solution space to
\[
\xnull := \left\{(\rho,C) \in \Y_1 \times \mathcal{C}\;|\; (\rho,C)\;\;\text{satisfies Assumption}~\ref{gammaQDD:assclassicalsolutions} \right\} .
\]

\begin{remark}
Note that $\xnull \neq \emptyset$ due to existence results in \cite{AbdUnt}.
Moreover, the classical analogue to the quantum energy (\ref{gammaQDDenergy}) is given as
\begin{align}
\label{QDD:energyDD}
E^0_S(\rho)  := \int_{\Omega} H(\rho^2) \, dx + \frac{\lambda^2}{2} \int_{\Omega} | \nabla V|^2\, dx  - \int_{\Omega} S\rho^2\, dx,
\end{align}
for given $S \in \Y_3$. It is well-known, that if $(\rho,V,S) \in \Y$ solves the drift-diffusion equations, then $\rho$ is the unique minimizer of $E^0_S$ in $\Y_1$ (cf.~\cite{AbdUnt}).
\end{remark}

Thus, the task at hand is to prove the following theorem:
\begin{theorem}\label{charfunction:gammaconv}
Let $\xe$ and $\xnull$ be as defined above, and let $(\e_k)$ be a sequence with $\e_k \rightarrow 0$ as $k \rightarrow \infty$. Then $(\chi_{\e_k})$ $\Gamma$--converges to $\chi_{0}$.
\end{theorem}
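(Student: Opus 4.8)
The plan is to verify the two defining conditions of Proposition~\ref{defgammaconvsequ} for the sequence $(\chi_{\e_k})$ and the candidate limit $\chi_0$, the characteristic function of $\xnull$. Because each $\chi_{\e_k}$ and $\chi_0$ takes only the values $0$ and $+\infty$, both conditions acquire a concrete meaning. The liminf-inequality \eqref{liminfinequ} is nontrivial only when $\liminf_k\chi_{\e_k}(\rho_k,C_k)=0$, i.e.\ when, along a subsequence, the pairs are genuine quantum solutions; one must then show their weak limit lies in $\xnull$. Dually, \eqref{limsupequ} is nontrivial only when $\chi_0(\rho,C)=0$, i.e.\ for an isolated classical solution, where one must exhibit a recovery sequence of quantum solutions converging weakly to it.

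For the liminf-inequality, let $(\rho_k,C_k)\rightharpoonup(\rho,C)$ weakly in $X$. If $\liminf_k\chi_{\e_k}(\rho_k,C_k)=+\infty$ there is nothing to prove; otherwise the liminf is $0$, so along a subsequence $(\rho_k,C_k)\in\Xi_{\e_k}$ and $\langle e_{\e_k}(\rho_k,C_k),\test\rangle=0$ for all $\test\in H_0^1(\Omega\cup\Gamma_N)$. The controls $(C_k)$ are bounded in $H^1(\Omega)$, hence in $L^p(\Omega)$, so Proposition~\ref{existenceweaksol} provides the bound \eqref{aprioriestimate} uniformly in $k$; in particular $\rho_k\ge 1/L$ a.e.\ and $(\rho_k)$ is bounded in $\Y_1$ \emph{independently} of $\e_k$. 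Extracting further subsequences gives $\rho_k\to\rho$ strongly in $L^p(\Omega)$ and a.e., whence the argument of Theorem~\ref{existenceoptcont} passes to the limit in every term of $\langle e_{\e_k}(\rho_k,C_k),\test\rangle$. The only new contribution is the Bohm term, which is harmless: since $|\e_k^2\int\n\rho_k\.\n\test\,dx|\le\e_k^2\,L\,\|\test\|_{H^1(\Omega)}\to 0$, it drops out. We are left with $\langle e_0(\rho,C),\test\rangle=0$ for all $\test$, so $(\rho,C)$ solves the drift-diffusion equations. By Assumption~\ref{gammaQDD:assclassicalsolutions} this solution is isolated, hence $(\rho,C)\in\xnull$ and $\chi_0(\rho,C)=0\le\liminf_k\chi_{\e_k}(\rho_k,C_k)$, which is \eqref{liminfinequ}.

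For the limsup-inequality, fix $(\rho,C)\in X$. If $(\rho,C)\notin\xnull$ then $\chi_0(\rho,C)=+\infty$ and \eqref{limsupequ} holds for any weakly convergent sequence, e.g.\ the constant one. If $(\rho,C)\in\xnull$, i.e.\ $(\rho,C)$ is an isolated classical solution, we freeze the control, $C_k:=C$, and must produce states $\rho_k\in\Y_1$ with $e_{\e_k}(\rho_k,C)=0$ and $\rho_k\rightharpoonup\rho$. This is exactly the assertion of Lemma~\ref{QDD:lemmaexistenceconvergentsolutions}; granting it, $(\rho_k,C_k)\in\Xi_{\e_k}$, so $\chi_{\e_k}(\rho_k,C_k)=0$ for all $k$ and $\limsup_k\chi_{\e_k}(\rho_k,C_k)=0=\chi_0(\rho,C)$, which is \eqref{limsupequ}. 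Combined with the previous paragraph, Proposition~\ref{defgammaconvsequ} yields the claimed $\Gamma$-convergence.

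The decisive difficulty, and where I expect the real work, is the construction underlying Lemma~\ref{QDD:lemmaexistenceconvergentsolutions}. The natural strategy is a perturbation argument recentred at the classical solution: writing $\rho_k=\rho+u_k$ and using $e_0(\rho,C)=0$, the equation $e_{\e_k}(\rho_k,C)=0$ becomes
\[
 L_{\e_k}u_k := -\e_k^2\Delta u_k + D_\rho e_0(\rho,C)\,u_k = \e_k^2\,\Delta\rho - R_k(u_k),
\]
with $R_k$ gathering the superlinear remainder. The obstruction is that this is a \emph{singular} perturbation: at $\e=0$ the operator $D_\rho e_0(\rho,C)$ is bounded and zeroth-order on $L^2(\Omega)$, with principal part multiplication by $2h'(\rho^2)\rho^2$ — bounded below by a positive constant since $h'$ is bounded away from zero and $\rho\ge 1/L$ — modulo the compact contributions of $\Phi_V$ and $\Phi_S$; it is thus Fredholm of index zero and, by the isolation Assumption~\ref{gammaQDD:assclassicalsolutions}, boundedly invertible on $L^2(\Omega)$. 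For $\e>0$, however, the Bohm term $-\e^2\Delta$ turns $L_\e$ into an unbounded second-order operator, so the functional-analytic setting changes and the implicit function theorem cannot be invoked naively across $\e=0$. The remedy is to exploit the regularising effect of the Bohm potential via an $\e$-weighted coercivity estimate: testing $L_{\e_k}$ against $u_k$ yields a lower bound of the form $\e_k^2\|\n u_k\|_{L^2}^2+c\,\|u_k\|_{L^2}^2$, where the zeroth-order term supplies $L^2$-coercivity and the compact contributions are absorbed using injectivity — this uniform-in-$\e$ absorption, typically argued by a compactness contradiction, is precisely where isolation is indispensable. Since $\langle\e_k^2\Delta\rho,u_k\rangle=-\e_k^2\int\n\rho\.\n u_k\,dx=O(\e_k^2\|\n u_k\|_{L^2})$ and $R_k$ is quadratically small on small balls, a contraction argument then produces $u_k$ with $\|u_k\|_{L^2}=O(\e_k)$ and $\|\n u_k\|_{L^2}=O(1)$, so that $\rho_k\to\rho$ strongly in $L^2(\Omega)$ while staying bounded in $H^1(\Omega)$, i.e.\ $\rho_k\rightharpoonup\rho$ weakly in $H^1(\Omega)$ — exactly the convergence required by \eqref{limsupequ}. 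Turning this heuristic into the uniform invertibility of $L_{\e_k}$ together with a rigorous contraction estimate is the technical heart of the matter.
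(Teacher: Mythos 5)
Your overall route is the same as the paper's: verify (\ref{liminfinequ}) and (\ref{limsupequ}) directly, pass to the limit in the weak formulation for the liminf part, and invoke Lemma~\ref{QDD:lemmaexistenceconvergentsolutions} with frozen control to produce the recovery sequence. There is, however, one step in your liminf argument that does not hold up as written: you extract the uniform bounds (the $\Y_1$-bound and in particular $\rho_k\ge 1/L$ a.e.) from Proposition~\ref{existenceweaksol}. That proposition is an existence result: it asserts that \emph{some} solution satisfying the estimate (\ref{aprioriestimate}) exists, not that \emph{every} element of $\Xi_{\e_k}$ satisfies it. Since the QDD system is not uniquely solvable --- the very reason the paper resorts to $\Gamma$-convergence in the first place --- the pairs $(\rho_k,C_k)$ handed to you by the condition $\chi_{\e_k}(x_k)=0$ are arbitrary solutions and need not be the ones constructed in Proposition~\ref{existenceweaksol}, so its estimate cannot be invoked for them. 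This is precisely the gap that Lemma~\ref{lemmawbound} (stated immediately before the theorem and proved in Appendix~\ref{proof:lemmawbound}) is designed to close: it delivers the $L^\infty$ bounds, the uniform positive lower bound, and the $H^1$ bounds for \emph{any} sequence of admissible pairs with bounded controls, uniformly in $\e_k\to 0$. These bounds are genuinely needed downstream: the uniform lower bound keeps $h(\rho_k^2)$ under control (think of $h=\log$ near zero) and keeps the operator defining $\Phi_S[\rho_k^2]$ uniformly elliptic, so your passage to the limit in the nonlinear terms does not go through without it. Replace the citation of Proposition~\ref{existenceweaksol} by Lemma~\ref{lemmawbound} and your liminf argument coincides with the paper's.

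Your closing discussion of how Lemma~\ref{QDD:lemmaexistenceconvergentsolutions} should be proved is not part of the theorem's proof (the paper, like you, uses the lemma as a black box here), but two remarks are in order. The paper proves it in Appendix~\ref{proof:QDD:lemmaexistenceconvergentsolutions} by a quantitative implicit function theorem (Proposition~\ref{appendix:prop:regular}) applied to the full coupled system $(\rho,V,S)$ in the variational setting $Y=[\Y_0]^3$, $Z=[H_0^1(\Omega\cup\Gamma_N)^*]^3\times[H^{\frac{1}{2}}(\Gamma_D)]^3$, with Assumption~\ref{gammaQDD:assclassicalsolutions} supplying the bounded invertibility of the linearization at $\e=0$. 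In that framework your ``singular perturbation'' worry largely dissolves: $-\e^2\Delta$ is a \emph{bounded} operator from $H^1$ into $(H^1)^*$ with norm $\mathcal{O}(\e^2)$, so the Bohm term is a regular perturbation and the $\e=0$ invertibility transfers to small $\e>0$; your $L^2$-based coercivity-plus-contraction scheme is the harder road. The paper even obtains $\|y_\e-y_0\|_Y=\mathcal{O}(\e)$, which is stronger than the weak $H^1$ convergence needed for (\ref{limsupequ}).
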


\begin{remark}
 Theorem~\ref{charfunction:gammaconv} may also be equivalently interpreted as the {\em Kuratowski convergence} of the sets $\xe$ towards $\xnull$ (cf.~\cite{DalMas}).
\end{remark}

For the proof of Theorem~\ref{charfunction:gammaconv}, we will need the following two lemmata, whose proofs are found in Appendix~\ref{proof:lemmawbound} and \ref{proof:QDD:lemmaexistenceconvergentsolutions}, respectively.

\begin{lemma}
\label{lemmawbound}
Let $(\e_k)$ be a sequence with $\e_k \rightarrow 0$ for $k \rightarrow \infty$ and $(x_k)$ be a sequence with $x_k=(\rho_k,C_k) \in \Xi_{\e_k}$ for all $k \in \nn$. If the sequence $(C_k)$ is bounded in $H^1(\Omega)$, then the sequences $(\rho_k)$, $(V_k) = (\Phi_V[\rho_k^2 - C_k])$ and $(S_k) =(\Phi_S[\rho_k^2])$ are uniformly bounded, i.e., there exists a constant $M > 0$ independent of $k$ such that 
\begin{align}
\label{uniformH1boundw}
\| \rho_k \|_{\Y_1} + \| V_k \|_{\Y_2} + \| S_k \|_{\Y_3} \leq M \quad \text{for all}\;\; k \in \nn.
\end{align}
Furthermore, there exist uniform lower and upper bounds $0 < \underline{\rho} < \overline{\rho}$ with
\begin{eqnarray}
\label{QDD:uniformboundsw}
\underline{\rho} \leq \rho_k \leq \overline{\rho} \quad \text{a.e.~in}\;\;\Omega,\ \text{for all}\;\; k \in \nn.
\end{eqnarray}
\end{lemma}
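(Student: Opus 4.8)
The plan is to establish all four bounds with constants that are \emph{structural}---valid for any solution in $\Xi_{\e_k}$, not just the one constructed in Proposition~\ref{existenceweaksol}---and, crucially, independent of $\e_k$, so that they survive the singular limit $\e_k\to0$. The circular dependence between the density and the potentials (the bound on $\rho_k$ needs bounds on $V_k,S_k$, whereas the Poisson source is $\rho_k^2$) is broken by a bootstrap. Throughout, the hypothesis that $(C_k)$ is bounded in $H^1(\Omega)$ gives, via the compact embedding $H^1(\Omega)\hookrightarrow L^p(\Omega)$, a uniform bound on $\|C_k\|_{L^p(\Omega)}$, which is the only information about the controls that enters the constants.

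I would start with the two estimates that require nothing about $\rho_k$ beyond positivity. Testing the Fermi equation $-\diver(\rho_k^2\n S_k)=0$ with $(S_k-\esssup_{\Gamma_D}S_D)^+\in H^1_0(\Omega\cup\Gamma_N)$ and using $\rho_k^2>0$ a.e.\ forces this truncation to be constant, hence zero; together with the lower truncation this gives $\|S_k\|_{L^\infty(\Omega)}\le\|S_D\|_{L^\infty(\Omega)}$, uniformly. Likewise, a Stampacchia truncation of \eqref{eqV} against $(-V_k-M)^+$ closes immediately, since the source $\rho_k^2\ge0$ enters with the favourable sign; this yields a uniform \emph{lower} bound $V_k\ge-M_1$ depending only on $\Omega,\lambda,\|V_D\|_\infty$ and $\|C_k\|_{L^p}$.

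From here the bootstrap proceeds on the first equation of \eqref{QDD}, written as $-\e_k^2\Delta\rho_k+\rho_k\mu_k=0$ with the quantum quasi-Fermi potential $\mu_k:=h(\rho_k^2)+V_k-S_k$. Testing with $(\rho_k-\overline\rho)^+$ (admissible once $\overline\rho\ge\|\rho_D\|_\infty$) gives $\e_k^2\|\n(\rho_k-\overline\rho)^+\|_{L^2}^2=-\int\rho_k\mu_k(\rho_k-\overline\rho)^+$; choosing $\overline\rho$ so large that $h(\overline\rho^2)>\|S_k\|_\infty+M_1$, the monotonicity of $h$ makes $\mu_k>0$ on $\{\rho_k>\overline\rho\}$, the right-hand side is nonpositive, and the truncation vanishes. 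With $\rho_k^2\le\overline\rho^2$ now uniform, the Poisson source $\rho_k^2-C_k$ is bounded in $L^6(\Omega)$, so a Stampacchia (or De~Giorgi) argument for \eqref{eqV} delivers the matching uniform \emph{upper} bound $V_k\le M_2$. Finally, testing with $(\underline\rho-\rho_k)^+$ and using $h(t)\to-\infty$ as $t\to0^+$ to make $\mu_k<0$ on $\{\rho_k<\underline\rho\}$ produces the uniform lower bound $\rho_k\ge\underline\rho>0$. This gives \eqref{QDD:uniformboundsw}, with $\underline\rho,\overline\rho$ depending only on $h$ and on $\|S_k\|_\infty+\|V_k\|_\infty$. (The growth hypothesis $h(t)=\mathcal O(t^{4/3})$ enters only to keep the enthalpy subcritical relative to $H^1(\Omega)\hookrightarrow L^6(\Omega)$ in $d\le3$, so that the energy and source terms remain $\e_k$-uniform; it is this subcriticality that underlies the Stampacchia step for $V_k$.)

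It remains to prove \eqref{uniformH1boundw}. For $V_k$ and $S_k$ this follows from standard elliptic energy estimates, the coefficient $\rho_k^2\in[\underline\rho^2,\overline\rho^2]$ being now uniformly elliptic with uniformly bounded data. I expect the uniform bound on $\|\n\rho_k\|_{L^2}$ to be the main obstacle: the naive test function $\rho_k-\rho_D$ yields only $\e_k\|\n\rho_k\|_{L^2}\le C$, which degenerates as $\e_k\to0$. The remedy is to test the first equation with $\mu_k$ itself (corrected by an $H^1$-extension of its boundary trace, so as to lie in $H^1_0(\Omega\cup\Gamma_N)$) and to expand $\n\mu_k=2h'(\rho_k^2)\rho_k\n\rho_k+\n V_k-\n S_k$. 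This produces, after integration by parts, an identity of the form $\int\rho_k\mu_k^2+2\e_k^2\int h'(\rho_k^2)\rho_k|\n\rho_k|^2=\e_k^2R_k$, where $R_k$ collects the lower-order term $-\int\n\rho_k\cdot(\n V_k-\n S_k)$ and a boundary contribution on $\Gamma_D$; dividing by $\e_k^2$ cancels the singular factor and leaves $2\int h'(\rho_k^2)\rho_k|\n\rho_k|^2\le R_k$, so that the coercivity $h'\ge h'_{\min}>0$ together with $\rho_k\ge\underline\rho$ gives $\|\n\rho_k\|_{L^2}^2\lesssim\|\n\rho_k\|_{L^2}+|\text{(flux on }\Gamma_D)|$, whence a uniform bound. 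The two genuinely delicate points are precisely this $\e_k^2$-cancellation and the absorption of the unknown normal flux $\partial_\nu\rho_k$ on $\Gamma_D$, which must be controlled through the regularity of the boundary data.
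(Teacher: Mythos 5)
Your bootstrap for the $L^\infty$ bounds \eqref{QDD:uniformboundsw} is sound, and its ordering (maximum principle for $S_k$; one-sided Stampacchia bound for $V_k$ exploiting the sign of $\rho_k^2$; truncation for $\overline\rho$; Stampacchia for the upper bound of $V_k$; truncation for $\underline\rho$) is a legitimate, self-contained alternative to the paper's route, which instead extracts an $L^{10/3}$ bound on $\rho_k$ from the energy $E^{\e_k}_{S_k}$ and the growth $h(t)=\mathcal O(t^{4/3})$ in order to bound $V_k$ in $L^\infty$ before truncating. The elliptic estimates for $\|V_k\|_{H^1}$ and $\|S_k\|_{H^1}$ are likewise fine. (Both you and the paper implicitly use that $h$ attains arbitrarily negative values near $t=0$ so that $\underline\rho$ exists; this is not literally contained in Assumption~\ref{gammaQDD:assenthalpyasylimit}.)

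The genuine gap is exactly at the step you yourself flag as delicate: the uniform $H^1$ bound for $\rho_k$, and it cannot be repaired within your approach. Testing with $\mu_k-\tilde\mu_k$, where $\tilde\mu_k$ is an $H^1$-extension of the trace $\mu_D:=h(\rho_D^2)+V_D-S_D$ of $\mu_k$ on $\Gamma_D$, gives
\[
\int_\Omega\rho_k\mu_k^2\,dx+2\e_k^2\int_\Omega\rho_k h'(\rho_k^2)|\n\rho_k|^2\,dx
=-\e_k^2\int_\Omega\n\rho_k\.\n\big(V_k-S_k-\tilde\mu_k\big)\,dx+\int_\Omega\rho_k\mu_k\tilde\mu_k\,dx,
\]
and the last term carries \emph{no} factor $\e_k^2$, so the identity is not of the form $\e_k^2R_k$ you claim; equivalently, in the strong-form version all interior terms do carry $\e_k^2$, but only at the price of the boundary term $\e_k^2\int_{\Gamma_D}\partial_\nu\rho_k\,\mu_D\,ds$. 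These extra terms are harmless only if $\mu_D\equiv 0$ on $\Gamma_D$ (thermal-equilibrium data), which Assumption~\ref{assdomainGAMMA} does not grant. After dividing by $\e_k^2$ you are left with $\e_k^{-2}\int_\Omega\rho_k\mu_k\tilde\mu_k\,dx$, respectively $\int_{\Gamma_D}\partial_\nu\rho_k\,\mu_D\,ds$, and neither can be absorbed: Cauchy--Schwarz against $\int_\Omega\rho_k\mu_k^2\,dx$ leaves $\e_k^{-2}\int_\Omega\rho_k\tilde\mu_k^2\,dx\to\infty$, and the normal flux $\partial_\nu\rho_k$ on $\Gamma_D$ is precisely where boundary layers of width $\mathcal O(\e_k)$ live, so bounding it uniformly presupposes the very regularity you are trying to prove --- no regularity of the boundary data can supply it. The paper closes this step without ever testing the quantum equation: since $\rho_k$ minimizes $E^{\e_k}_{S_k}$ and the auxiliary classical solution $\tilde\rho_k$ of system \eqref{QDD:auxiliarysystem} (classical drift-diffusion with the quantum Fermi potential $S_k$) minimizes $E^0_{S_k}$, the two minimality properties sandwich to give $\|\n\rho_k\|_{L^2(\Omega)}\le\|\n\tilde\rho_k\|_{L^2(\Omega)}$, the factors $\e_k^2$ cancelling structurally; then $\n\tilde\rho_k$ is estimated from the pointwise differentiated algebraic relation $2\tilde\rho_k h'(\tilde\rho_k^2)\n\tilde\rho_k=\n(S_k-\tilde V_k)$ --- essentially your chain-rule computation, but applied to the classical auxiliary solution, where there is no $\e^2\Delta$ term and hence no boundary flux at all. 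This transference of the gradient bound by energy comparison is the missing idea in your proof.
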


As a result of Lemma~\ref{lemmawbound}, one may extract subsequences in $(\e_k)$ that converge to some weak limit in $X$, and hope to classify the limit as a solution of the classical drift-diffusion equations. A similar result for a fixed doping profile $C$ was shown in \cite{AbdUnt}. 

\begin{lemma}
\label{QDD:lemmaexistenceconvergentsolutions}
Let $(\rho_0,C) \in \xnull$ satisfy Assumption~\ref{gammaQDD:assclassicalsolutions}. Then there exists a sequence $(\e_k)$ with $\e_k \rightarrow 0$ as $k \rightarrow \infty$ and a sequence $(\rho_k)$ with $(\rho_k, C) \in \Xi_{\e_k}$ for all $k\in\nn$ such that $\rho_k \rightharpoonup \rho_0$ in $H^1(\Omega)$.
\end{lemma}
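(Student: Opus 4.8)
The plan is to realize the quantum solution $\rho_k$ as a perturbation of the classical solution $\rho_0$ via the implicit function theorem, using the isolatedness from Assumption~\ref{gammaQDD:assclassicalsolutions} to invert the linearisation. Let me set up the functional framework. For fixed $C$, I consider the map $G\colon \rr_{\ge 0} \times \Y_1 \to H^1_0(\Omega\cup\Gamma_N)^*$ defined by $G(\e,\rho) := e_\e(\rho,C)$, where I extend the definition at $\e=0$ by $e_0(\rho,C)$, i.e.~dropping the $\e^2$-Laplacian term. By hypothesis $G(0,\rho_0)=0$. The key structural observation is that $G$ depends on $\e$ only through the parameter $\e^2$ multiplying the linear term $-\int \n\rho\.\n\test\,dx$, so $G$ is affine — in fact smooth — in $\e^2$, and $G(\e,\cdot)$ is continuously Fréchet differentiable in $\rho$ (the smoothness in $\rho$ coming from Assumption~\ref{gammaQDD:assenthalpyasylimit} on $h$, together with the regularity of the solution operators $\Phi_V$ and $\Phi_S$).

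The heart of the argument is the invertibility of the partial derivative $D_\rho G(0,\rho_0)\colon \Y_0 \to H^1_0(\Omega\cup\Gamma_N)^*$. This is precisely where Assumption~\ref{gammaQDD:assclassicalsolutions} enters: the isolatedness of the classical solution $(\rho_0,V[\rho_0],S[\rho_0])$ is designed to guarantee that this linearisation is an isomorphism. I would verify that $D_\rho G(0,\rho_0)$ is a Fredholm operator of index zero — its principal part, after accounting for the contributions from $h'$, $\Phi_V$, and $\Phi_S$, should be a compact perturbation of an invertible operator (the monotonicity $h'$ bounded away from zero provides coercivity, while the potential couplings are lower-order and compact via the embedding $H^1(\Omega)\hookrightarrow L^p(\Omega)$). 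Fredholmness of index zero together with injectivity — which is exactly the statement that $\rho_0$ is isolated, so the kernel is trivial — yields surjectivity and hence bounded invertibility by the open mapping theorem.

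With $D_\rho G(0,\rho_0)$ invertible, the implicit function theorem furnishes $\delta>0$ and a $C^1$ map $\e \mapsto \rho(\e)$ on $[0,\delta)$ with $\rho(0)=\rho_0$ and $G(\e,\rho(\e))=0$, so $(\rho(\e),C)\in\xe$ for all small $\e>0$. Choosing any sequence $\e_k\to 0$ and setting $\rho_k := \rho(\e_k)$, continuity of the branch in $\Y_1\subset H^1(\Omega)$ gives $\rho_k \to \rho_0$ strongly, hence weakly, in $H^1(\Omega)$, which is the claimed convergence.

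The main obstacle I anticipate is the rigorous treatment of the invertibility of $D_\rho G(0,\rho_0)$, i.e.~translating the abstract ``isolated solution'' hypothesis into the analytic statement that the kernel of the linearisation is trivial and that the operator is Fredholm of index zero. The coupling through the nonlocal solution operators $\Phi_V$ and $\Phi_S$ (which themselves depend on $\rho_0$ through the $\rho^2$-weighted Poisson problem for $S$) must be linearised carefully and shown to contribute only compact terms; establishing the compactness and the Fredholm structure, rather than the formal application of the implicit function theorem, is where the real work lies. A secondary subtlety is ensuring the branch $\rho(\e)$ stays in the admissible set where $\rho \ge \underline{\rho} > 0$, so that $\Phi_S$ remains well defined along the branch; this follows for small $\e$ from the uniform lower bound in Lemma~\ref{lemmawbound} together with continuity, but should be noted explicitly.
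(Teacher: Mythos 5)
Your strategy is essentially the paper's: treat the Bohm term as a regular perturbation in the weak formulation and construct the quantum branch by an implicit function theorem around the classical solution, with Assumption~\ref{gammaQDD:assclassicalsolutions} supplying invertibility of the linearisation. The only structural difference is that you reduce to the single equation $G(\e,\rho)=e_\e(\rho,C)$ via the solution operators $\Phi_V,\Phi_S$, while the paper keeps the full triple $(\rho,V,S)$ together with the trace conditions as components of an operator $F$ on $[\Y_0]^3$, and applies a quantitative variant of the implicit function theorem (Proposition~\ref{appendix:prop:regular}) whose uniform-in-$\e$ remainder estimate gives $\|y_\e-y_0\|=\mathcal{O}(\e)$, hence strong (so in particular weak) $H^1$ convergence, just as your branch argument does.

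However, your proposed route to the invertibility contains a genuine error. You claim that injectivity of $D_\rho G(0,\rho_0)$ ``is exactly the statement that $\rho_0$ is isolated.'' The implication goes only in the other direction: an invertible linearisation forces the solution to be isolated, but an isolated solution can perfectly well have a degenerate linearisation (the scalar equation $t^3=0$ at $t=0$ is the standard counterexample). The paper does not attempt this deduction; read together with the remark following Assumption~\ref{gammaQDD:assclassicalsolutions}, it takes the assumption as directly granting a bounded inverse of $D_yF(y_0,0)$ --- that is, ``isolated'' is used there as a nondegeneracy hypothesis --- and this invertibility is what feeds Proposition~\ref{appendix:prop:regular}. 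A secondary obstruction to your plan: the Fredholm-of-index-zero structure you invoke is not available in the spaces at hand. At $\e=0$ the principal part of the linearisation is multiplication by $2\rho_0^2h'(\rho_0^2)$ (the zero-order contribution $h(\rho_0^2)+V_0-S_0$ cancels by the classical equation), and a multiplication operator from $\Y_0$ into $H_0^1(\Omega\cup\Gamma_N)^*$ has range contained in a space of functions; it is injective but far from surjective and has non-closed range, so it is not Fredholm, and ``compact perturbation of an invertible operator'' cannot be run between $\Y_0$ and the dual space. The repair is to do what the paper does: take invertibility of the linearisation as the operative content of Assumption~\ref{gammaQDD:assclassicalsolutions} (equivalently, restate the assumption as nondegeneracy rather than topological isolatedness), after which the rest of your implicit-function-theorem argument, including the positivity of $\rho$ along the branch, goes through as you describe.
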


The idea of the proof of Lemma~\ref{QDD:lemmaexistenceconvergentsolutions} lies in the fact that the quantum model is a regular perturbations of the classical model for sufficiently small $\e>0$. Therefore, standard tools from asymptotic analysis and the implicit function theorem allow to construct solutions to the quantum model, with the help of solutions satisfying Assumption~\ref{gammaQDD:assclassicalsolutions}. For completeness, we include the technical details in Appendix~\ref{proof:QDD:lemmaexistenceconvergentsolutions}.

\begin{proof}[Proof of Theorem~\ref{charfunction:gammaconv}]
We need to show (\ref{liminfinequ}) and (\ref{limsupequ}) of Proposition~\ref{defgammaconvsequ}.

We begin with (\ref{liminfinequ}): Let $x=(\rho,C) \in \xnull$, i.e., $\chi_{0}(x) = 0$. Since the characteristic function $\chi_{0}$ only takes the values $0$ and $+\infty$, the inequality is satisfied trivially. Now suppose $x \notin \xnull$, i.e., $\chi_{0}(x) = +\infty$. Let $(x_k)$ be a sequence converging weakly to $x$ in $X$. Suppose otherwise, i.e., $\liminf_{k \rightarrow \infty} \chi_{\e_k}(x_k) = 0$. Consequently, there exists a subsequence, again denoted by $(x_k)$, with $x_k \in {\Xi_{\e_k}}$ for all $k \in \nn$. From the weak convergence we obtain the boundedness of the sequence in $X$, i.e.,
\[
\| \rho_k \|_{H^1(\Omega)} + \| C_k \|_{H^1(\Omega)} \leq c \quad \text{for all}\;\; k  \in \nn,
\]
for some $c>0$. Since $\rho_k$ is uniformly bounded in $H^1(\Omega)$, we may pass to the limit in the first term on the right hand side of (\ref{weakformulation1}) to obtain
\[
 \e_k^2 \big( \nabla \rho_k, \nabla \test \big)_{L^2(\Omega)}\to 0\quad\text{for}\;\;k\to\infty.
\]
Furthermore, we infer from Lemma~\ref{lemmawbound} the existence of yet another subsequence, denoted again by $(x_k)$, that converges weakly to $x_*=(\rho_*,C_*) \in \Y_1 \times \mathcal{C}$ satisfying
\begin{align*}
\rho_k \rightharpoonup \rho_* \quad \text{in}\;\; H^1(\Omega),& \quad  C_k \rightharpoonup C_* \quad \text{in}\;\; H^1(\Omega),\\
\rho_k(x) \to \rho_*(x) \quad \text{a.e.~in}\;\; \Omega,& \quad \rho_k \rightharpoonup^* \rho_* \quad \text{in}\;\; L^\infty(\Omega).
\end{align*}
As for the remaining terms in (\ref{weakformulation1}) we can proceed as in the proof of Theorem~\ref{existenceoptcont} to conclude the convergence
\[
\langle e_0(x_*), \test \rangle = 0 \quad\text{for all}\;\;\test\in H_0^1(\Omega\cup\Gamma_N) \quad\Longrightarrow \quad x_*  \in \xnull.
\]
which contradicts our assumption $x \notin \xnull$, simply due to the uniqueness of weak limits. Therefore, (\ref{liminfinequ}) must hold true.

We now show (\ref{limsupequ}): Let $x \notin \xnull$, i.e., $\chi_{0}(x)=+\infty$. Then (\ref{limsupequ}) is trivially satisfied for any sequence because $\chi_{\e_k}$ only takes either the value $0$ or $+\infty$. Now let $x \in \xnull$, i.e., $\chi_{0}(x)=0$. Due to Lemma \ref{QDD:lemmaexistenceconvergentsolutions} there exists a sequence $(x_k)$ with $x_k=(\rho_k,C)$ converging weakly to $x$ in $X$ with $x_k \in \Xi_{\e_k}$, i.e., $\chi_{\e_k}(x_k) = 0$ for all $k \in \nn$. Hence, (\ref{limsupequ}) is also satisfied in this case. This concludes the proof.
\end{proof}

\section{Convergence of Minima and Minimizers}

To include the state equation into the cost functional we use the characteristic function and define the cost functional $\J_{\e}$ as
\begin{align}
\label{costfunctionalgamma}
 \J_{\e}  =  J + \chi_{\e},
\end{align}
where $J$ is a functional satisfying Assumption~$\ref{assumptioncostgammaQDD}$. Problem~\ref{optgammaQDD} can now be formulated as follows:

\begin{problem}\label{opt2}
Find $(\rho_*, C_*) \in \Y_1 \times \mathcal{C}$ such that
\[
 \J_{\e}(\rho_*, C_*) = \min_{(\rho,C) \in \Y_1 \times \mathcal{C}} \J_{\e}(\rho,C).
\]
\end{problem}

\begin{remark}
Note that $\J_{\e}$ is no longer Fr\'{e}chet differentiable. However, this is not crucial since Problem~\ref{optgammaQDD} and Problem~\ref{opt2} are equivalent. 
\end{remark}

To prove the $\Gamma$--convergence of $(\J_{\e})$ to $\J_{0}$, we use the fact that $(\chi_{\e})$ $\Gamma$--converges to $\chi_{0}$, along with Assumption \ref{assumptioncostgammaQDD} on the cost functional $J$. This is sufficient to prove the $\Gamma$--convergence of $(\J_{\e})$ to $\J_{0}$.

\begin{theorem}
Let $(\J_{\e})$ and $\J_{0}$ be defined as above. Then 
\[
{\Gamma\text{--}\lim}_{\e \rightarrow 0}\,\J_{\e} = \J_{0},
\]
i.e., $\J_{0}$ is the $\Gamma$--limit of $\J_{\e}$.
\end{theorem}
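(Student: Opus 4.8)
The plan is to verify the two conditions (L-inf) and (L-sup) of Proposition~\ref{defgammaconvsequ} for the sequence $(\J_{\e_k})$, exploiting that the perturbation $J$ is independent of $\e$ and decomposes as $J = J_a + J_b$ according to Assumption~\ref{assumptioncostgammaQDD}. The guiding heuristic is that $\Gamma$--convergence is stable under the addition of a fixed, weakly continuous functional, and that the separated structure of $J$ is precisely tailored so that the weakly continuous part $J_a$ handles the recovery sequence, while the merely weakly lower semicontinuous part $J_b$ only ever enters the liminf estimate.

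For (L-inf) I would fix $x = (\rho,C) \in X$ and an arbitrary sequence $x_k = (\rho_k,C_k) \rightharpoonup x$, write $\J_{\e_k}(x_k) = J_a(\rho_k) + J_b(C_k) + \chi_{\e_k}(x_k)$, and treat the three terms separately: weak continuity of $J_a$ gives $J_a(\rho_k) \to J_a(\rho)$; weak lower semicontinuity of $J_b$ gives $\liminf_k J_b(C_k) \ge J_b(C)$; and the liminf-inequality already established in Theorem~\ref{charfunction:gammaconv} gives $\liminf_k \chi_{\e_k}(x_k) \ge \chi_0(x)$. Combining these via superadditivity of the liminf then yields $\liminf_k \J_{\e_k}(x_k) \ge J_a(\rho) + J_b(C) + \chi_0(x) = \J_0(x)$. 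The only subtlety is the case $x \notin \xnull$, where $\chi_0(x) = +\infty$ forces the right-hand side to $+\infty$; there, any sequence either satisfies $\liminf_k \chi_{\e_k}(x_k) = +\infty$, making the bound trivial, or admits a subsequence with $x_k \in \Xi_{\e_k}$, which by the liminf part of Theorem~\ref{charfunction:gammaconv} would force $x \in \xnull$, a contradiction.

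For (L-sup), if $x \notin \xnull$ then $\J_0(x) = +\infty$ and the inequality holds for any sequence. If $x = (\rho_0,C) \in \xnull$, I would invoke Lemma~\ref{QDD:lemmaexistenceconvergentsolutions} to obtain a recovery sequence $x_k = (\rho_k,C)$ with $x_k \in \Xi_{\e_k}$ and $\rho_k \rightharpoonup \rho_0$ in $H^1(\Omega)$. The crucial feature is that this sequence keeps the control $C$ fixed, so that $J_b(C)$ is constant along it and no lower-semicontinuity gap can open; meanwhile weak continuity of $J_a$ gives $J_a(\rho_k) \to J_a(\rho_0)$, while $\chi_{\e_k}(x_k) = 0$. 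Hence $\J_{\e_k}(x_k) = J_a(\rho_k) + J_b(C) \to J_a(\rho_0) + J_b(C) = \J_0(x)$, and in fact $\lim_k \J_{\e_k}(x_k) = \J_0(x)$, which is even stronger than (L-sup).

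The main point to stress is that essentially all analytic difficulty has already been absorbed into Theorem~\ref{charfunction:gammaconv} and, above all, into Lemma~\ref{QDD:lemmaexistenceconvergentsolutions}, whose construction of quantum solutions converging to an isolated classical solution supplies the recovery sequence; the remaining work is bookkeeping. The one delicate step is ensuring that the liminf manipulations in (L-inf) never produce an $\infty - \infty$ indeterminacy, which is guaranteed by $\chi_{\e_k} \ge 0$ together with the lower bound on $J$ from Assumption~\ref{assumptioncostgammaQDD}. Finally, I would emphasize that the separated-type hypothesis is not merely a convenience: were $J$ to couple $\rho$ and $C$ in a way that is only weakly lower semicontinuous, the fixed-$C$ recovery sequence of Lemma~\ref{QDD:lemmaexistenceconvergentsolutions} would no longer guarantee $\limsup_k J(x_k) \le J(x)$, and (L-sup) could fail.
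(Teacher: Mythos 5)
Your proposal is correct and follows essentially the same route as the paper: the liminf-inequality is obtained by combining the weak lower semicontinuity of $J$ (via the split $J=J_a+J_b$) with the already-established $\Gamma$--liminf for $\chi_{\e_k}$ and superadditivity of the liminf, and the limsup-inequality uses the fixed-$C$ recovery sequence from Lemma~\ref{QDD:lemmaexistenceconvergentsolutions} together with weak continuity of $J_a$. The only cosmetic difference is in the case $\J_0(x)=+\infty$ of (L-sup), where you observe the inequality is trivial for any sequence, while the paper takes the constant sequence and argues via (L-inf); both are valid.
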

\begin{proof}
As in the proof of Theorem~\ref{charfunction:gammaconv}, let $(\e_k)$ be a zero sequence for $k \rightarrow \infty$, with
\[
 X = H^1(\Omega)\times H^1(\Omega).
\]

To see that the sequence $\left( \J_{\e_k} \right)$ satisfies (\ref{liminfinequ}), we note that $J$ is weakly lower semicontinuous due to Assumption \ref{assumptioncostgammaQDD}. Now let $x=(\rho,C) \in X$ and $(x_k)\in X$ be a sequence converging weakly to $x$, then we can estimate
\[
\J_{0}(x) = J(x) + \chi_{0}(x) \le \liminf_{k\rightarrow \infty} J(x_k) + \liminf_{k\rightarrow \infty} \chi_{\e_k}(x_k) \le \liminf_{k\rightarrow \infty} \J_{\e_k}(x_k),
\]
which gives the liminf-inequality.

To show (\ref{limsupequ}) we exploit the special structure of $J$ ensured by Assumption~\ref{assumptioncostgammaQDD}. 
Let $x=(\rho,C) \in X$. If $\J_{0}(x) = +\infty$, we define the constant sequence $x_k = x$ for all $k \in \nn$. For sufficiently large $N \in \nn$, we have $\J_{\e_k}(x_k) = +\infty$ for $k>N$, because otherwise (\ref{liminfinequ}) would be violated. Therefore (\ref{limsupequ}) holds for this case. If $\J_{0}(x) < \infty$, i.e., $x \in \xnull$, we can argue as in the proof of Theorem~\ref{charfunction:gammaconv} to show the existence of a sequence $(\rho_k,C)\in \Xi_{\e_k}$ converging weakly to $x$ in $X$. Assumption~\ref{assumptioncostgammaQDD} ensures the continuity of $J$ with respect to the weak topology. Since the doping profile $C$ is constant, this yields $J(x_k) \to J(x)$ as $k\to\infty$, and consequently also $\J_{\e_k}(x_k) \to \J_0(x)$ as $k\to\infty$. Thus, (\ref{limsupequ}) also holds in this case.
\end{proof}

It remains to show the weak equi-coercivity of the functionals $\J_{\e}$, which may be shown with the help of Lemma \ref{lemmawbound}.

\begin{theorem}
\label{corollaryequicoercive}
 The sequence $( \J_{\e})$ is weakly equi-coercive in $X$.
\end{theorem}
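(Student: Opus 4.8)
The plan is to verify Definition~\ref{equicoercive} directly: for each $t\in\rr$ I must exhibit a weakly compact set $K_t\subseteq X$ containing every sublevel set $\{\J_{\e}\le t\}$. Since $X=H^1(\Omega)\times H^1(\Omega)$ is a reflexive Banach space, by the Banach--Alaoglu/Kakutani theorem a set is weakly compact as soon as it is bounded and weakly closed; hence it suffices to produce a uniform (in $\e$) norm bound on the sublevel sets and then take $K_t$ to be the weak closure of their union, which will automatically sit inside a fixed closed ball. First I would fix $(\rho,C)$ with $\J_{\e}(\rho,C)\le t$. By definition $\J_{\e}=J+\chi_{\e}$, so finiteness of $\J_{\e}$ forces $\chi_{\e}(\rho,C)=0$, i.e.\ $(\rho,C)\in\xe$, and simultaneously $J(\rho,C)\le t$.

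The next step is to extract a bound on the control $C$ from the cost. Because $J$ is bounded from below and radially unbounded in its second variable (Assumption~\ref{assumptioncostgammaQDD}), the inequality $J(\rho,C)\le t$ confines $C$ to a bounded subset of $H^1(\Omega)$: concretely, $J_b(C)\le t-\inf J_a$ together with radial unboundedness of $J_b$ yields $\|C\|_{H^1(\Omega)}\le R(t)$ for a constant $R(t)$ depending only on $t$ and on $J$, uniformly in $\e$. This is the crucial observation that couples the finiteness of the cost to the coercivity mechanism, and it is exactly where the structural hypotheses on $J$ are used.

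Having a $\e$-independent bound on $C$, I would then feed it into Lemma~\ref{lemmawbound}: since $(\rho,C)\in\xe$ (for whichever $\e$) and $\|C\|_{H^1(\Omega)}$ is bounded by $R(t)$, the lemma supplies a constant $M=M(R(t))$, independent of $\e$, with $\|\rho\|_{\Y_1}\le M$, and in particular $\|\rho\|_{H^1(\Omega)}\le M$. Strictly, Lemma~\ref{lemmawbound} is phrased for a sequence $(\rho_k,C_k)\in\Xi_{\e_k}$ with $(C_k)$ bounded, but its conclusion is a uniform a priori estimate that depends on the data only through the $H^1$-bound on the control; I would invoke it in this uniform form. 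Combining the two bounds gives
\[
\|(\rho,C)\|_{X}=\|\rho\|_{H^1(\Omega)}+\|C\|_{H^1(\Omega)}\le M+R(t)=:r(t),
\]
a bound valid for every $\e$ and every $(\rho,C)$ in the sublevel set $\{\J_{\e}\le t\}$.

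Finally I would set $K_t:=\overline{B}_{r(t)}^{\,X}$, the closed ball of radius $r(t)$ in $X$, which is weakly compact by reflexivity of $X$, and by the estimate above satisfies $\{\J_{\e}\le t\}\subseteq K_t$ for every $\e>0$. This establishes weak equi-coercivity. The main obstacle is not any single estimate but making precise the uniformity in $\e$: one must ensure that the constant delivered by Lemma~\ref{lemmawbound} truly depends on $C$ only through its $H^1$-bound and not on $\e$, so that a single $K_t$ works across the whole family $(\J_{\e})$; once the uniform a priori estimate of Lemma~\ref{lemmawbound} is read off in this $\e$-independent fashion, the remaining reasoning is a routine reflexivity argument.
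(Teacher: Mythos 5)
Your proof is correct and takes essentially the same route as the paper: finiteness of $\J_{\e}$ forces $(\rho,C)\in\xe$, the radial unboundedness of $J$ in $C$ (via the separated structure $J=J_a+J_b$) gives an $\e$-uniform $H^1(\Omega)$ bound on the control, Lemma~\ref{lemmawbound} then yields the uniform bound on $\|\rho\|_{H^1(\Omega)}$, and reflexivity of $X$ turns the bounded sublevel sets into subsets of a weakly compact ball $K_t$. The additional care you take --- reading Lemma~\ref{lemmawbound} as a uniform a priori estimate depending only on the $H^1$ bound of the control, and exhibiting $K_t$ explicitly as a closed ball --- merely makes explicit what the paper leaves implicit.
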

\begin{proof}
Recalling Definition \ref{equicoercive}, we show that the subset $\{ \J_{\e} \leq t \}$ is bounded w.r.t.~the strong topology in $X$ for every $t \in \rr$. In particular, the norm
\[
\| x \|_X =\| \rho \|_{H^1(\Omega)} + \| C \|_{H^1(\Omega)}
\]
must be bounded for every $x=(\rho,C)$ with $ \J_{\e}(x) \leq t$ for every $t \in \rr$.

Let $t < \infty$. Then every $(\rho,C) \in \{ \J_{\e} \leq t \}$  must be in the set of admissible states $\Xi_{\e}$ for some $\e >0 $ due to the characteristic function $\chi_{\e}$ in the definition of the cost functionals in (\ref{costfunctionalgamma}). Furthermore, $\|C\|_{H^1(\Omega)}$ must be uniformly bounded in $\e$ due to the radial unboundedness of $J$ w.r.t.~$C$ and because the term $J_b$ is independent of $\e$. Due to Lemma~\ref{lemmawbound} we also have the uniform boundedness of $\|\rho\|_{H^1(\Omega)}$, and therefore the weak equi-coercivity of the sequence $(\J_{\e})$.
\end{proof}

Now all the assumptions to show the convergence of minima are satisfied, which we recall in the following proposition (cf.~\cite[Theorem~7.8]{DalMas}).

\begin{proposition}[Convergence of minima]\label{convminima}
Let $(\J_{\e})$ and $\J_{0}$ be defined as above. Then $\J_{0}$ attains its minimum on $X$ and
\[
\min_{x \in X} \J_{0}(x) = \lim_{\e \rightarrow 0} \min_{x \in X} \J_{\e}(x).
\]
\end{proposition}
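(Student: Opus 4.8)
The plan is to read this proposition as an instance of the fundamental theorem of $\Gamma$--convergence (the cited \cite[Theorem~7.8]{DalMas}): once the family $(\J_{\e})$ is known to $\Gamma$--converge to $\J_{0}$ and to be weakly equi-coercive, the attainment of the limit minimum and the convergence of minimal values follow from a short abstract two--sided estimate. Both ingredients are already available—the $\Gamma$--convergence from the preceding theorem and the weak equi-coercivity from Theorem~\ref{corollaryequicoercive}—so it suffices to verify the hypotheses and reproduce the estimate. Since $\Gamma$--convergence was established along an arbitrary zero sequence $(\e_k)$, I would fix such a sequence, prove the claim for it, and recover the full limit $\e\to 0$ at the end by the standard subsequence principle, writing $m_k := \min_{x\in X}\J_{\e_k}(x)$ (each infimum being attained by Theorem~\ref{existenceoptcont} together with the equivalence of Problems~\ref{optgammaQDD} and \ref{opt2}).

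I would begin with the upper estimate, which also secures finiteness. Because $\xnull\neq\emptyset$, fix some $x_0\in\xnull$; then $\inf_{x\in X}\J_0(x)\le \J_0(x_0)=J(x_0)<\infty$. For an arbitrary $x\in X$ the limsup--inequality (\ref{limsupequ}) provides a recovery sequence $y_k\rightharpoonup x$ with $\limsup_{k}\J_{\e_k}(y_k)\le \J_0(x)$, and since $m_k\le \J_{\e_k}(y_k)$ we obtain $\limsup_{k} m_k\le \J_0(x)$. Taking the infimum over $x\in X$ yields
\[
\limsup_{k\to\infty} m_k \le \inf_{x\in X}\J_0(x) < \infty.
\]
In particular the values $m_k$ are bounded above by some $t\in\rr$ for all large $k$, which is exactly what is needed to invoke equi-coercivity.

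Next comes the lower estimate together with attainment. Let $x_k$ be a minimizer of $\J_{\e_k}$, so that $\J_{\e_k}(x_k)=m_k\le t$ for large $k$. Weak equi-coercivity (Definition~\ref{equicoercive}, Theorem~\ref{corollaryequicoercive}) places all these $x_k$ in a fixed weakly compact set $K_{t}$, so a subsequence satisfies $x_k\rightharpoonup x_*$ in $X$. The liminf--inequality (\ref{liminfinequ}) then gives
\[
\J_0(x_*)\le \liminf_{k\to\infty}\J_{\e_k}(x_k)=\liminf_{k\to\infty} m_k .
\]
Chaining this with the upper estimate produces
\[
\inf_{x\in X}\J_0(x)\le \J_0(x_*)\le \liminf_{k\to\infty} m_k\le \limsup_{k\to\infty} m_k\le \inf_{x\in X}\J_0(x),
\]
so all four quantities coincide. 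Hence $\J_0(x_*)=\inf_X\J_0$, the infimum is attained at $x_*$ (it is a minimum), and $\lim_{k} m_k=\min_X\J_0$. As this holds for every zero sequence $(\e_k)$ with one and the same limiting value $\min_X\J_0$, the full limit $\lim_{\e\to 0}\min_X\J_\e$ exists and equals $\min_X\J_0$.

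The only step that is more than bookkeeping—and the one I would expect to be the main obstacle—is guaranteeing the uniform upper bound $m_k\le t$ that licenses the use of equi-coercivity; this is not automatic and relies essentially on $\xnull\neq\emptyset$ together with the existence of a recovery sequence (ultimately Lemma~\ref{QDD:lemmaexistenceconvergentsolutions}, as encoded in the limsup--inequality). Without such a sequence one could not rule out $m_k\to -\infty$ or the escape of minimizers to infinity, and the argument would collapse. Everything else is the routine two--line mechanism of the fundamental theorem of $\Gamma$--convergence.
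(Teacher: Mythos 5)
Your proposal is correct and takes essentially the same route as the paper: the paper gives no separate proof of this proposition, but establishes $\Gamma$--convergence and weak equi-coercivity in the two preceding theorems and then invokes \cite[Theorem~7.8]{DalMas}, and your two-sided estimate (recovery sequences for the upper bound, equi-coercivity plus the liminf-inequality for the lower bound and attainment) is precisely the standard proof of that cited result. One small point of care: after extracting the weakly convergent subsequence of minimizers, the quantity $\liminf_{k}\J_{\e_k}(x_k)$ is the liminf along that subsequence, which may exceed $\liminf_k m_k$ for the full sequence, so one should first pass to a subsequence realizing $\liminf_k m_k$ --- but your closing appeal to the subsequence principle over arbitrary zero sequences already repairs this.
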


The convergence of minima allows us to also show the convergence of minimizers.

\begin{corollary} [Convergence of minimizers]
\label{convminimizers}
Let $\J_{\e}$ and $\J_{0}$ be defined as above, and let $(\e_k)$ be a sequence with $\e_k \rightarrow 0$ as $k \rightarrow \infty$ and $x_k^* = ( \rho_k^*, C_k^*)$ such that
\[
\J_{\e_k}(x_k^*)  = \min_{x \in X} \J_{\e_k}(x).
\]
Then, there exists a subsequence, again denoted by $(x_k^*)$, such that 
\[
 x_k^* \rightharpoonup x_{0}^* \quad \text{in}\;\; X
\]
with $x_{0}^* \in \xnull$ and 
\[
\J_{0}( x_{0}^*) = \min_{x \in X} \J_{0}(x),
\]
i.e., $x_0^*$ is a minimizer of $\J_{0}$.
\end{corollary}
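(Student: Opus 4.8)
The plan is to run the standard concluding step of the fundamental theorem of $\Gamma$--convergence (cf.~\cite[Theorem~7.8]{DalMas}): weak equi-coercivity produces a weakly convergent subsequence of the minimizers, and the liminf-inequality, combined with the convergence of minima, identifies the weak limit as a minimizer of $\J_{0}$. All the analytic ingredients are already in place---the $\Gamma$--convergence $\J_{\e_k}\to\J_{0}$, the weak equi-coercivity of $(\J_{\e})$ from Theorem~\ref{corollaryequicoercive}, and the convergence of minima from Proposition~\ref{convminima}---so the corollary reduces to a routine subsequence extraction.

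First I would record that $m:=\min_{x\in X}\J_{0}(x)$ is finite. Since $\xnull\neq\emptyset$, we may pick $\bar x\in\xnull$, so that $\chi_{0}(\bar x)=0$ and $\J_{0}(\bar x)=J(\bar x)<\infty$, because $J$ is real-valued and bounded below; hence $m\le\J_{0}(\bar x)<\infty$. By Proposition~\ref{convminima}, the minimal values converge, $\J_{\e_k}(x_k^*)=\min_{x\in X}\J_{\e_k}(x)\to m$, so in particular there is $t\in\rr$ with $\J_{\e_k}(x_k^*)\le t$ for all $k$ large. Weak equi-coercivity (Definition~\ref{equicoercive}) then furnishes a weakly compact set $K_t\subset X$ with $x_k^*\in K_t$ for all large $k$; by reflexivity of $X$ we extract a subsequence, still denoted $(x_k^*)$, with $x_k^*\rightharpoonup x_0^*$ in $X$ for some $x_0^*\in X$.

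Next I would invoke the liminf-inequality (\ref{liminfinequ}) for $\J_{\e_k}\to\J_{0}$ along this subsequence, giving
\[
\J_{0}(x_0^*)\le\liminf_{k\to\infty}\J_{\e_k}(x_k^*)=m,
\]
where the last equality uses the convergence of the minimal values. Since $m=\min_{x\in X}\J_{0}(x)\le\J_{0}(x_0^*)$ holds by definition of the minimum, the two inequalities force $\J_{0}(x_0^*)=m$, i.e.\ $x_0^*$ minimizes $\J_{0}$. Finally, as $\J_{0}(x_0^*)=m<\infty$ and $\J_{0}=J+\chi_{0}$ with $J$ finite-valued, it follows that $\chi_{0}(x_0^*)=0$, that is $x_0^*\in\xnull$, which is the asserted membership.

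I do not expect a genuine obstacle here, since the substantial work resides in Theorem~\ref{corollaryequicoercive} (which rests on the uniform bounds of Lemma~\ref{lemmawbound}) and in the $\Gamma$--convergence statements. The only points requiring a little care are that convergence is obtained merely along a subsequence, consistent with the possible non-uniqueness of minimizers, and that the finiteness of the limiting minimum is precisely what upgrades the statement ``$x_0^*$ minimizes $\J_{0}$'' to the sharper conclusion $x_0^*\in\xnull$.
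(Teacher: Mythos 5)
Your proposal is correct, and its skeleton coincides with the paper's proof: Proposition~\ref{convminima} bounds the minimal values, Theorem~\ref{corollaryequicoercive} yields the weakly convergent subsequence, and the weak limit is then identified as a minimizer lying in $\xnull$. The difference is in how the identification step is discharged. The paper re-runs the PDE limiting argument from the proof of Theorem~\ref{charfunction:gammaconv} (passing to the limit in the weak formulation of the state system along the subsequence $x_k^*\in\Xi_{\e_k}$) to conclude $x_0^*\in\xnull$, and then cites \cite[Corollary~7.20]{DalMas} for the minimality of $x_0^*$. You instead derive both conclusions abstractly from results already established: the liminf-inequality (\ref{liminfinequ}) for the $\Gamma$--convergence $\J_{\e_k}\to\J_0$ together with the convergence of minima gives $\J_0(x_0^*)\le m\le \J_0(x_0^*)$, hence minimality, and the finiteness $\J_0(x_0^*)=m<\infty$ (which you correctly ground in $\xnull\neq\emptyset$ and the finite-valuedness of $J$) forces $\chi_0(x_0^*)=0$, i.e.\ $x_0^*\in\xnull$. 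Your route is self-contained and avoids both the external citation and a repetition of the compactness argument, at the price of relying on the $\Gamma$--liminf inequality along the extracted subsequence --- which is legitimate, since a subsequence of a zero sequence $(\e_k)$ is again a zero sequence, so the liminf-inequality of the paper's $\Gamma$--convergence theorem applies verbatim. The paper's route, by contrast, directly exhibits $x_0^*$ as a weak solution of the classical equations, which is marginally more informative but logically equivalent here.
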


\begin{proof}
Due to Proposition~\ref{convminima}, there exists $N\in \nn$ and some constant $c>0$ such that $\J_{\e_k}(x_k^*) < c$ for all $k\geq N$. Due to the weak equi-coercivity from Theorem~\ref{corollaryequicoercive}, the sequence $(x_k^*)$ contains a subsequence, again denoted by $(x_k^*)$, which converges weakly to some $x_0^*$ in $X$. With the same arguments as in the proof of Theorem~\ref{charfunction:gammaconv}, we conclude that $x_0^* \in \xnull$. The assertion follows from \cite[Corollary~7.20]{DalMas}.
\end{proof}

\section{Numerical Results}
\label{gammaQDDnumresults}

In this section we give a numerical example for the convergence of minima and minimizers. For the simulation, we solve the system
\begin{subequations}
\label{eq:QEP}
\begin{align}
\label{QEP2}
-\e^2 \Delta \rho + \rho(h(\rho^2) + V +V_{\text{ext}} - S)  &= 0, \\
\label{QEP4}
-\lambda^2 \Delta V &= \rho^2 - C, \\
\label{QEP3}
-\diver(\rho^2 \nabla S) &= 0.
\end{align}
\end{subequations}
As enthalpy function we use
\[
h(t) := \log(t), \quad \text{for}\;\; t\leq K,\;\; K >0,
\]
and assume that our simulation stays in the regime of 'low' densities, i.e., that $t < K$ always holds. This assumption is very common in semiconductor modelling (cf.~\cite{HinPin, UntVol, HiPi07}). Recall that the physical electron density is given by $n=\rho^2$.

\begin{figure}[htbp]
\centering
\includegraphics[scale=0.2]{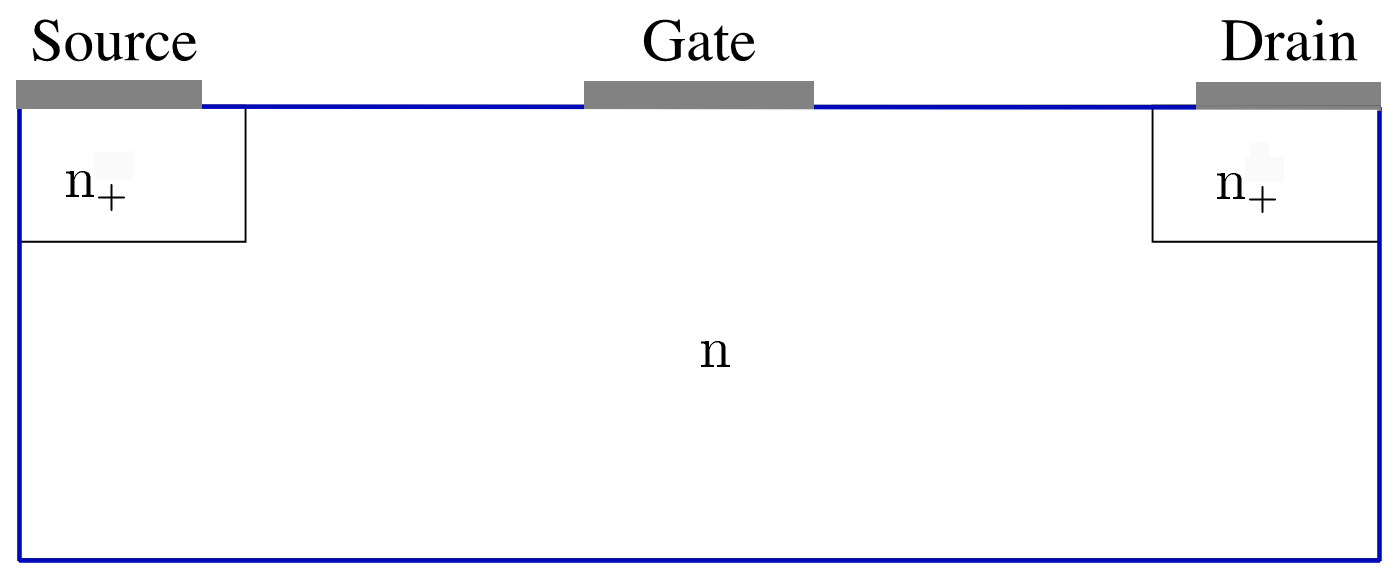}
\caption{Geometry of a MESFET.}
\label{fig:MESFET}
\end{figure}

We intend to optimize the current of a metal semiconductor field effect transistor (MESFET) device (cf.~\cite{Se84}) by adjusting the background doping profile. MESFETs are typically modelled by a rectangle $\Omega$ in two spatial dimensions (see Fig.\,\ref{fig:MESFET}), where the boundary $\Gamma:=\partial \Omega$ splits into two parts. Fig.\,\ref{fig:MESFET} is a simple sketch of a typical MESFET adapted from \cite{HolJuePie}. The grey parts (Ohmic contacts) represent the Dirichlet boundary $\Gamma_D$, where an external voltage may be applied. This consists of the source, drain and gate contacts, which controls the on/off state of the MESFET. We use the physically relevant boundary conditions described in Remark~\ref{boundarydata}. More specifically, the gate contact of the MESFET is modelled as a Schottky-contact by setting $\rho_D = \alpha_V \sqrt{C}$ for some $\alpha_V \in(0,1)$. Here, we choose $\alpha_V = 0.1$. The blue lines correspond to the insulating Neumann boundary $\Gamma_N$. The $n_+$ region denotes the highly doped part of the MESFET, while the $n$ region is considerably less doped than the $n_+$ region. 

For the well-posedness of the reduced cost functional
\[
\hat J(C) = J(\Phi_\rho[C], C),
\]
where $\Phi_\rho$ is the control-to-state map, which assigns for any $C\in \mathcal{C}$ a unique $\rho$ satisfying the equations (\ref{eq:QEP}), we require the uniqueness of solutions, which is ensured by the following result \cite{PinUnt}.
\begin{proposition}
\label{existencesolunique}
Let $C \in \mathcal{C}$. Then there is a constant $U_{\text{max}} > 0$ such that if
\[
\|U\|_{L^{\infty}(\Omega)} \le U_{\text{max}},
\]
the solution $(\rho,V, S) \in \Y$ from Proposition \ref{existenceweaksol} is unique.
\end{proposition}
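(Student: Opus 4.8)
The plan is to exploit that the only genuine source of non-uniqueness in (\ref{QDD}) is the quasi-Fermi potential $S$: by the energy characterisation recalled after Proposition~\ref{existenceweaksol}, for a \emph{fixed} $S$ the density $\rho$ is the unique minimizer of $E^{\e}_S$ in $\Y_1$, while $V=\Phi_V[\rho^2-C]$ is determined linearly. I would therefore compare two solutions $(\rho_i,V_i,S_i)$, $i=1,2$, of (\ref{QDD}) for the \emph{same} $C$ and the same boundary data, set $\bar\rho=\rho_1-\rho_2$, $\bar V=V_1-V_2$, $\bar S=S_1-S_2$ (all vanishing on $\Gamma_D$), and aim to close a self-referential estimate of the form $\|\bar\rho\|\le\kappa(U)\,\|\bar\rho\|$ with $\kappa(U)\to0$ as $\|U\|_{L^\infty(\Omega)}\to0$. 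Choosing $U_{\text{max}}$ so that $\kappa<1$ then forces $\bar\rho=0$, and consequently $\bar V=\bar S=0$. The estimate is assembled from two Lipschitz bounds chained together.

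The first bound is $\|\bar\rho\|\le K_\rho\,\|\bar S\|$. Subtracting the first equations, eliminating $\bar V$ through $-\lambda^2\Delta\bar V=(\rho_1+\rho_2)\bar\rho$ (which, after symmetrisation, contributes the nonnegative self-energy $\tfrac{\lambda^2}{2}\|\n\bar V\|_{L^2}^2$), and testing with $\bar\rho$ reduces the left-hand side to $\e^2\|\n\bar\rho\|_{L^2}^2$ plus a zeroth-order quadratic form whose density is controlled by $h(\rho^2)+2\rho^2h'(\rho^2)+(V-S)$, the residual coupling being $-\int_\Omega\rho\,\bar S\,\bar\rho\,dx$. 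Under the a priori bounds $\underline{\rho}\le\rho_i\le\overline{\rho}$ together with the uniform-in-$\e$ bound on $V$ (the Stampacchia estimate that relies on $h(t)\in\mathcal O(t^{4/3})$), this density is bounded below by a constant $c_0>0$; this is precisely the uniform strict convexity of $E^{\e}_S$ underlying the cited uniqueness of $\rho$ for given $S$, and it delivers $K_\rho$ independent of $\e$.

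The second bound is $\|\n\bar S\|\le K_S\,\|\n S_2\|\,\|\bar\rho\|$ with a \emph{small} factor. Subtracting the current equations and testing with $\bar S$ gives
\[
\int_\Omega\rho_1^2\,|\n\bar S|^2\,dx=-\int_\Omega(\rho_1^2-\rho_2^2)\,\n S_2\.\n\bar S\,dx,
\]
so the uniform lower bound $\rho_1\ge\underline{\rho}$ yields coercivity on the left, while the right-hand side is estimated by $\|\rho_1^2-\rho_2^2\|$ times $\|\n S_2\|$ times $\|\n\bar S\|$. The decisive observation is that $\|\n S_2\|$ is driven by the applied voltage: at $U\equiv0$ the system is at thermal equilibrium, where the quasi-Fermi potential is flat and the current $\rho^2\n S$ vanishes, so $\|\n S_2\|_{L^2}=\mathcal O(\|U\|_{L^\infty(\Omega)})$ uniformly over the a priori bounded set of solutions. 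Chaining the two bounds gives $\kappa(U)=K_\rho K_S\,\|\n S_2\|=\mathcal O(\|U\|_{L^\infty(\Omega)})$, and a sufficiently small $U_{\text{max}}$ completes the argument.

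The main obstacle is the quasi-Fermi coupling, i.e.\ the second bound. Because the current-continuity equation is not variational, $S$ cannot be handled by convexity and one must rely solely on the uniform ellipticity furnished by $\rho\ge\underline{\rho}>0$; moreover the \emph{smallness} of $\|\n S_2\|$ follows from no monotonicity but must be extracted from the near-equilibrium structure of the boundary datum $S_D=\log(\rho_D^2/\delta_c^2)+U$, which is exactly where the hypothesis $\|U\|_{L^\infty(\Omega)}\le U_{\text{max}}$ enters. A second, purely technical difficulty is the trilinear term $\int(\rho_1^2-\rho_2^2)\,\n S_2\.\n\bar S$: since only $\n\bar S\in L^2(\Omega)$ is available, closing the estimate requires the higher integrability of $\n S_2$ (a Meyers-type estimate) together with the Sobolev embeddings valid for $d\le3$. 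Finally, one must keep $K_\rho$, $K_S$ and the modulus $c_0$ uniform in $\e$ as $\e\to0$, so that a single $U_{\text{max}}$ works for every scaled Planck constant; this forces the zeroth-order form, rather than the vanishing $\e^2$-term, to carry the coercivity, which is again guaranteed by the growth assumption on $h$ through the uniform bound on $V$.
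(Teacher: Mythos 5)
First, a remark on the comparison itself: the paper does not prove Proposition~\ref{existencesolunique} at all — it is imported from the reference \cite{PinUnt}. So your proposal can only be measured against the standard near-equilibrium argument, whose architecture (Lipschitz dependence of $\rho$ on $S$ via energy convexity, smallness of $\n S$ extracted from the current equation, contraction for small $U$) is indeed what you describe. The strategy is the right one, but as written the proof has a genuine gap in the norm bookkeeping, and it is exactly the point you set aside as ``purely technical''.

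The convexity step, if it is to be uniform in $\e$ as you insist, can only give an $L^2$ estimate: the energy is convex in the variable $n=\rho^2$ (the Fisher term $\e^2\int|\n\sqrt n|^2\,dx$ and the Poisson term are convex in $n$, and $\int H(n)\,dx$ is strongly convex with modulus $\inf h'>0$), whence $\|\bar n\|_{L^2}\le K\|\bar S\|_{L^2}$. (Incidentally, your pointwise claim that the density $h(\rho^2)+2\rho^2h'(\rho^2)+(V-S)$ is bounded below by $c_0>0$ is unjustified: at a solution $h(\rho^2)+V-S=\e^2\Delta\rho/\rho$, which is neither signed nor pointwise bounded, and $h$ itself may be negative near $\underline{\rho}$; the true coercivity is an integrated, Bregman-type statement in $n$, not a pointwise one.) Feeding this $L^2$ bound into the current-equation step, you need $\|\bar n\,\n S_2\|_{L^2}\le \kappa\,\|\bar n\|_{L^2}$ with $\kappa$ small, i.e.\ $\n S_2\in L^\infty(\Omega)$ with small norm. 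That is precisely the regularity which is not available: for $-\diver(\rho_2^2\n S_2)=0$ with merely bounded measurable coefficient, mixed Dirichlet--Neumann conditions and a Lipschitz domain, $W^{1,\infty}$ regularity generically fails, and Meyers' theorem — the tool you invoke — yields only $\n S_2\in L^{2+\delta}$ with $\delta>0$ possibly small. Paying with H\"older then requires $\|\bar n\|_{L^r}$, $r=2(2+\delta)/\delta$, and interpolating $\|\bar n\|_{L^r}\le\|\bar n\|_{L^2}^{2/r}\|\bar n\|_{L^\infty}^{1-2/r}$ destroys linearity: the chained inequality becomes $\|\bar n\|_{L^2}\lesssim\kappa\,\|\bar n\|_{L^2}^{2/r}$, which bounds $\|\bar n\|_{L^2}$ but no longer forces $\bar n=0$. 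The standard repair is to prove the first Lipschitz bound in a stronger norm ($L^\infty$, or $H^1\hookrightarrow L^6$ paired with $\n S_2\in L^3$) using elliptic regularity for $-\e^2\Delta+\cdots$; but those constants scale like negative powers of $\e$, so one must give up the uniformity in $\e$ you demand — which is legitimate, since the proposition is stated for a fixed scaled Planck constant and $U_{\text{max}}$ may depend on $\e$. A second, smaller gap: the claim $\|\n S_2\|_{L^2}=\mathcal O(\|U\|_{L^\infty(\Omega)})$ presupposes that the trace of $S_D$ on $\Gamma_D$ is constant when $U=0$; with the data of Remark~\ref{boundarydata}, $S_D=\log(\rho_D^2/\delta_c^2)+U$, this holds only if $C$ takes the same value at all contacts, so the near-equilibrium smallness must be phrased in terms of the variation of the full trace of $S_D$, not of $U$ alone.
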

In the following, we  assume that the applied voltage $U$ satisfies the assumption of Proposition~\ref{existencesolunique}. 
The voltage
\begin{align}
\label{QEP:externalPotential}
U = \alpha_V \cdot\underline{}\begin{cases}
0.15 & \text{at the drain}, \\
0.0375 & \text{at the source}, \\
0.075 & \text{at the gate}.
\end{cases}
\end{align}
proved to be sufficiently small for the forward simulation to work. 

The rest of the boundary, $\Gamma_N := \Gamma \setminus \Gamma_D$, represents the insulating parts of the boundary and is therefore of Neumann type, i.e., 
\[
 \partial_\nu \rho = \partial_\nu V = \rho^2\partial_\nu S= 0,
\]
where $\nu$ is the outward unit normal along $\Gamma_{N}$.

The aim of the optimization is to amplify the current $\rho^2\n S$ over the drain $\Gamma_{O} \subset \Gamma_D$ to reach a given value $I_d$. Since it is desirable to maintain the overall structure of the semiconductor device, large deviations from the initial doping profile should be penalized. Therefore, we define the cost functional $J$ as
\[
 J(\rho,S,C) = \frac{1}{2}\abs{I(\rho, S) - I_d}^2 + \frac{\gamma}{2}\|\n(C-C_{\text{ref}})\|_{L^2(\Omega)}^2
\]
with
\[
I(\rho,S) = \int_{\Gamma_{O}} \rho^2\partial_\nu S\,ds
\]
where $I_d\in \rr$ is the desired current flow on the drain $\Gamma_{O}$. Here, $C_{\text{ref}}$ is the reference doping profile (e.g.~the given MESFET), which is later also used as an initial guess for the optimization algorithm. The parameter $\gamma>0$ is a regularization parameter, which allows to adjust the deviations of the optimal profile from the reference $C_{\text{ref}}$. This type of cost functional is most commonly used in the design of optimal doping profiles \cite{BurPin,HinPin07,HinPin}.


\begin{remark}
The Lagrange multipliers $\xi=(\xi_\rho,\xi_V,\xi_S)$ corresponding to the first-order optimality condition of the optimization problem are required to formally solve the adjoint problem (see also \cite{UntVol}), given by the equations
\begin{subequations}
\begin{align*}
-\e^2\Delta \xi_\rho+\xi_\rho(2 + \log(\rho^2)+V-S)  \hspace*{5em}&\\
+\; 2\rho(\n S\.\n \xi_S - \lambda^2\xi_V) &= (I(\rho,S)-I_d)\n_S I(\rho,S) \\
-\lambda^2\Delta\xi_V + \rho\xi_\rho &= 0\\
-\diver(\rho^2\n\xi_S) - \rho\xi_\rho &= (I(\rho,S)-I_d)\n_\rho I(\rho,S)
\end{align*}
\end{subequations}
with homogeneous Dirichlet and Neumann boundary conditions on $\Gamma_D$ and $\Gamma_N$, respectively. Finally, the optimality condition reads
\begin{align}
\label{QEPoptcondition}
 \langle \hat J'(C_*),C\rangle = 0 \quad\text{for all}\;\; C\in \mathcal{C},
\end{align}
where
\[
 \langle \hat J'(C),\test\rangle = - \int_\Omega \xi_V\test + \gamma \n( C - C_d)\.\n \test\,dx\quad\text{for all }\,\test\in H^1_0(\Omega\cup\Gamma_N).
\]
For the update of the optimization algorithm, we will need the Riesz representative of the derivative denoted by $\n \hat J\in H_0^1(\Omega\cup\Gamma_N)$. This may be done by solving the Poisson problem:
\[
 \text{Find}\; g\in H^1_0(\Omega\cup\Gamma_N):\quad \int_\Omega \n g\.\n\test\,dx = \langle \hat J'(C),\test \rangle\quad\text{for all }\,\test\in H_0^1(\Omega\cup\Gamma_N).
\]
Note, that $g|_{\Gamma_D} = 0$, therefore, if we start with some function $C_0$ with $C_0|_\Gamma=C_{\text{ref}}|_\Gamma$. Then, the update will leave the values on the Dirichlet boundary unchanged.
\end{remark}


For the optimization we choose a gradient algorithm with Armijo-type line search as stated in \cite{HinPinUlbUlb}. 

\begin{algorithm}[h]
  \caption{Gradient Algorithm}
  \label{alg:GradientOptimization}
  \begin{tabular}{l p{\textwidth}}
    \algorithmicrequire \ A feasible doping profile $C_\text{ref}\in \mathcal{C}$, a tolerance ${\tt TOL} > 0$.\\
    \algorithmicensure \ A feasible doping profile $C_*$ with (locally) minimal costs.
  \end{tabular}
  \begin{algorithmic}[1]
    \STATE Set $k = 0$
	\STATE Set $C_k = C_\text{ref}$
	\STATE Compute gradient $g_k=\n \hat J(C_k)$.
    \WHILE{$\|g_k\|_{H^1(\Omega)}>{\tt TOL}$}
    \STATE $\alpha_k = \text{Armijo linesearch}_{\alpha > 0} \left\{\hat J\big(C_k - \alpha g_k\big)\right\}$
    \STATE $C_{k+1} = C_k - \alpha_k g_k$
    \STATE Set $k = k+1$
    \STATE Compute gradient $g_k=\n \hat J(C_k)$.
    \ENDWHILE
    \RETURN $C_* = C_k$
  \end{algorithmic}
\end{algorithm}

Consider the MESFET profile:
\begin{align*}
C_0(x) = \begin{cases}
1 & \text{if } x \text{ is in a highly doped $n_+$ region}\\
0.01 & \text{else}
\end{cases}
\end{align*}

\begin{remark}
The MESFET profile $C_0$ with jumps is  commonly used in semiconductor modelling. However, it holds that $C_0 \notin \mathcal{C}$, since $C_0 \notin H^1(\Omega)$. For this reason and also for numerical purposes, the reference profile $C_\text{ref}$ is chosen as a smoothed versions of $C_0$ (cf.~\cite{Schn11}) 
\end{remark}

We desire an amplification of the current by $100\%$, i.e., $I_d := 2 I(\rho_\text{ref},S_\text{ref})$, where $\rho_\text{ref}$ and $S_\text{ref}$ are computed with the reference doping profile $C_\text{ref}$.
All calculations are made on a grid with $80\times 80$ nodes and an error tolerance of $10^{-8}$. We choose the parameters $\lambda^2 = 0.0017$ and $\e^2 = 1.88\cdot 10^{-4}$. In \cite{Schn11} the grid independence of Algorithm \ref{alg:GradientOptimization} was also shown.

Optimization results for the quantum and classical drift-diffusion model are shown in Fig.\,\ref{fig:gammaQDDopt}. The relative deviation of the optimized doping profile to the reference profile for the quantum model can be seen in Fig.\,\ref{fig:gammaQDDreldeviationDopingProfileQDDReference}. We observe that the doping profile hardly changes in the highly doped regions and near to the contact while it is increased up to $300 \%$ in the channel. Fig.\,\ref{fig:gammaQDDreldeviationDensityQDDReference} shows that this changes the electron densities correspondingly, i.e., it varies only slightly in the upper part while it is increased up to $70 \%$ in the lower part. There is only a very small difference between the optimized profiles in the quantum and classical drift-diffusion model in Fig.\,\ref{fig:gammaQDDreldeviationDopingProfileQDDDD}. The same holds true for the electron density seen in Fig.\,\ref{fig:gammaQDDreldeviationDensityQDDDD}, except close to the gate contact. In this region there occur large gradients, which are detected in the quantum model (due to the Bohm potential $\e^2 \frac{ \Delta \rho}{\rho}$) but not in the classical model.

\begin{figure}[htbp]
\centering 
   	\subfigure[Reference doping profile.]{
   	\includegraphics[scale=0.15]{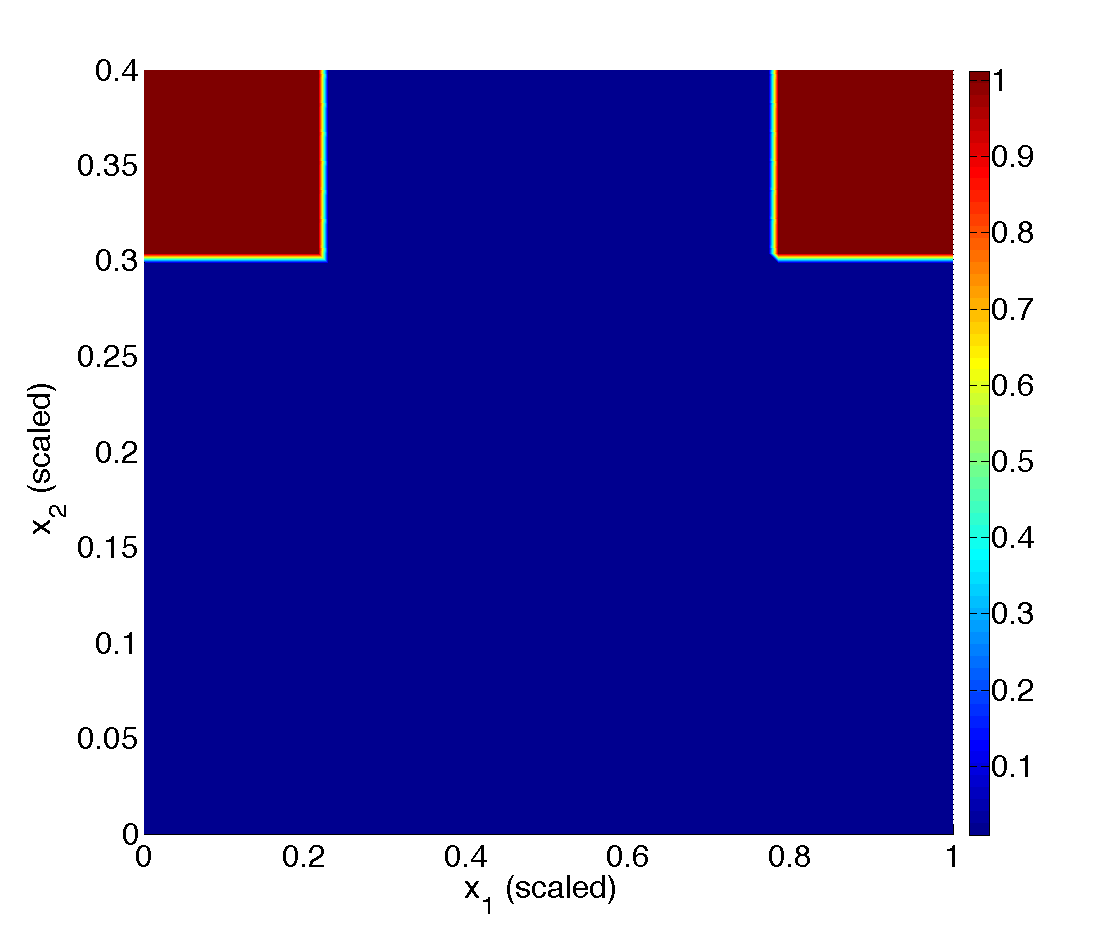}
   	\label{fig:DopingReferencegammaQDD}
   	}
   	\hspace*{0.3em}
    \subfigure[Total current density for reference doping profile of QDD ($\|n\n S\|_\infty = 0.557$).]{
    \includegraphics[scale=0.15]{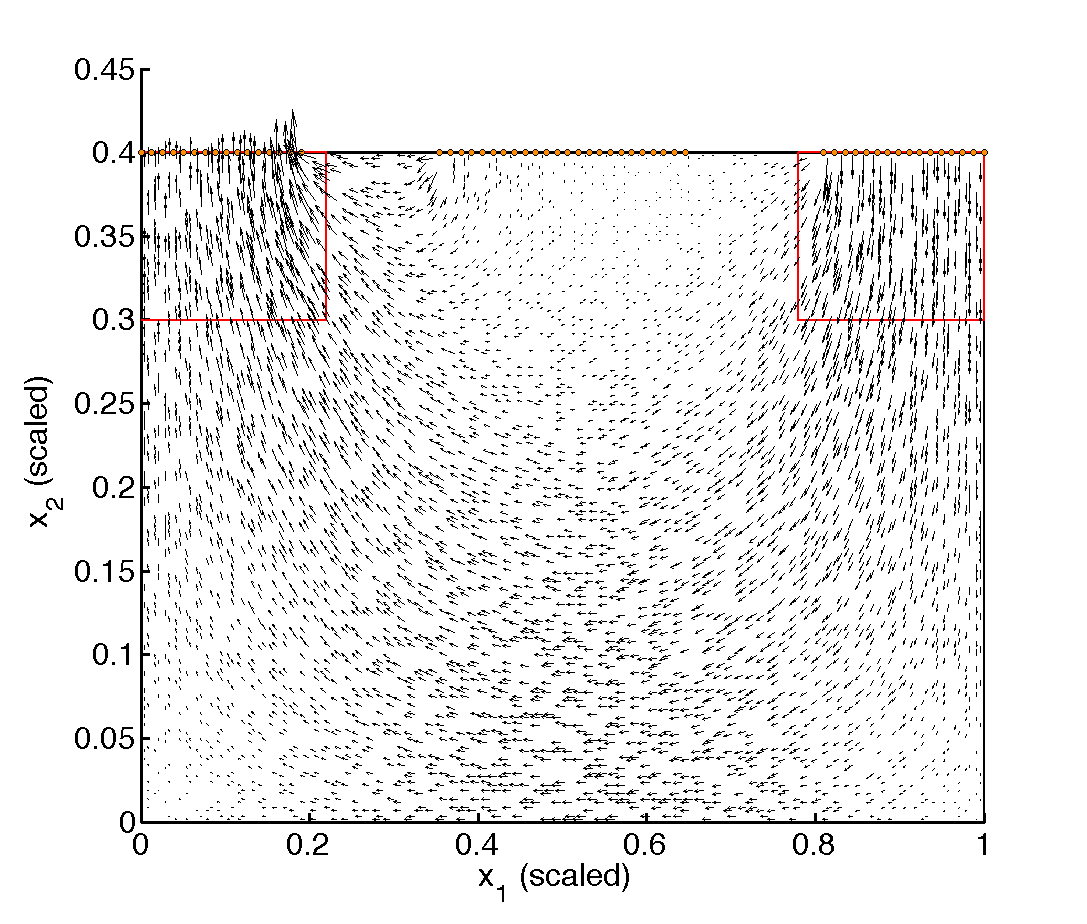}
    \label{fig:CurrentReferencegammaQDD}
    }
    
    \subfigure[Optimized doping profile of QDD.]{
    \includegraphics[scale=0.15]{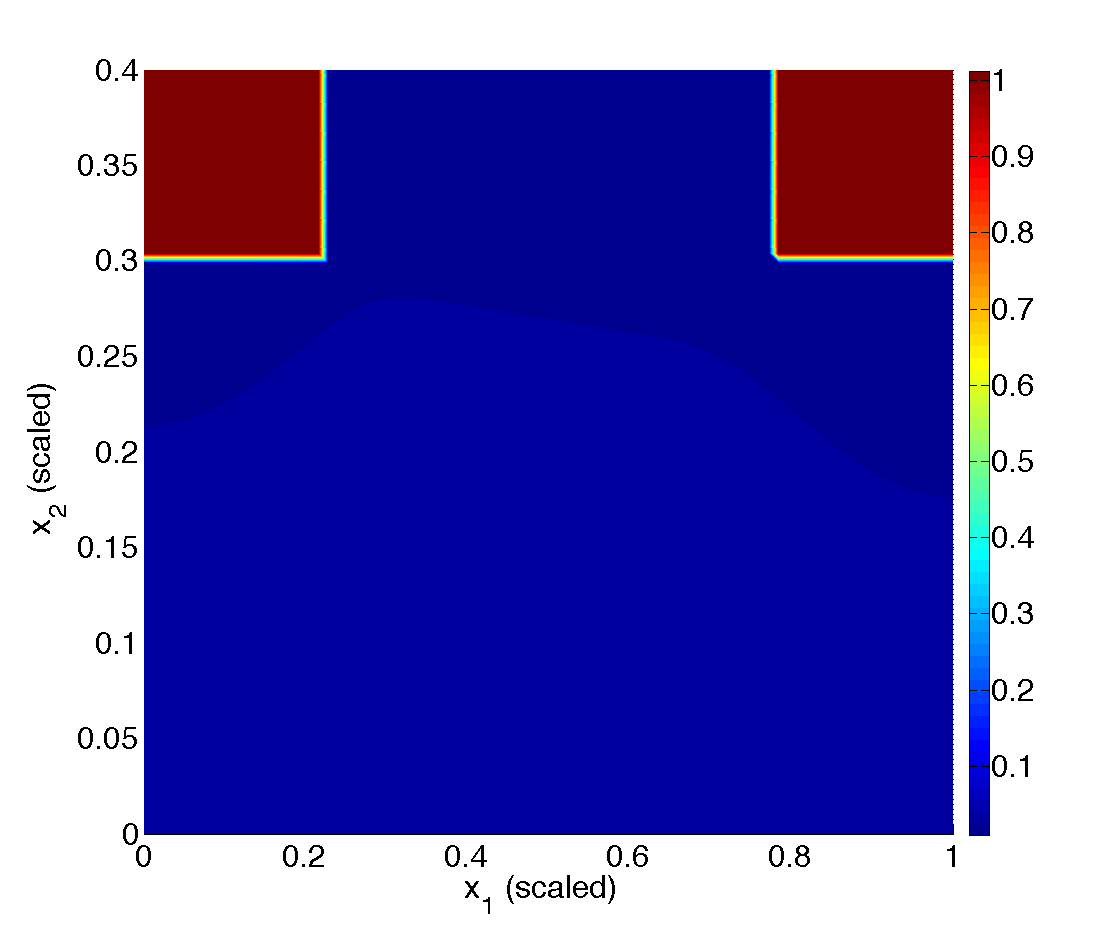}
    \label{fig:DopingoptQDDgammaQDD}
    }
    \hspace*{0.3em}
    \subfigure[Total current density for optimized doping profile of QDD ($\|n\n S\|_\infty = 0.741$).]{
    \includegraphics[scale=0.15]{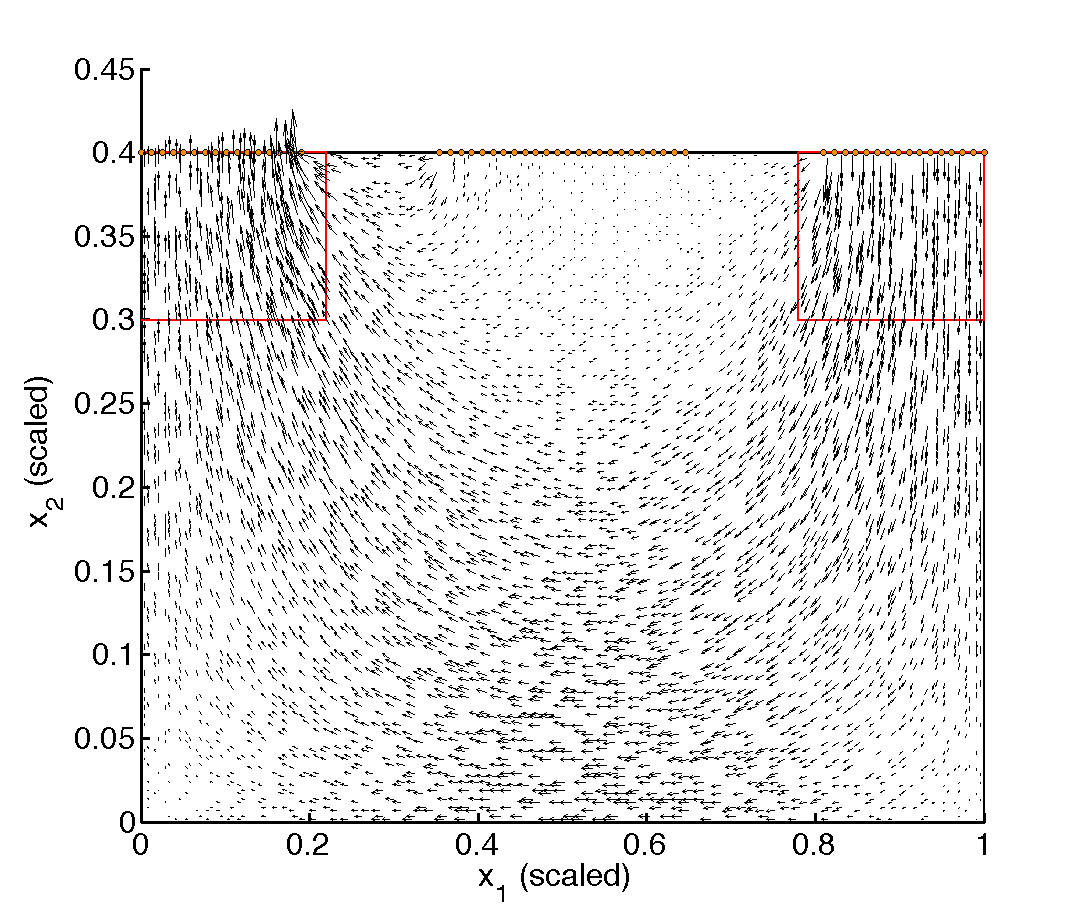}
    \label{fig:CurrentoptQDDgammaQDD}
    }

   \subfigure[Optimized doping profile of DD.]{
   \includegraphics[scale=0.15]{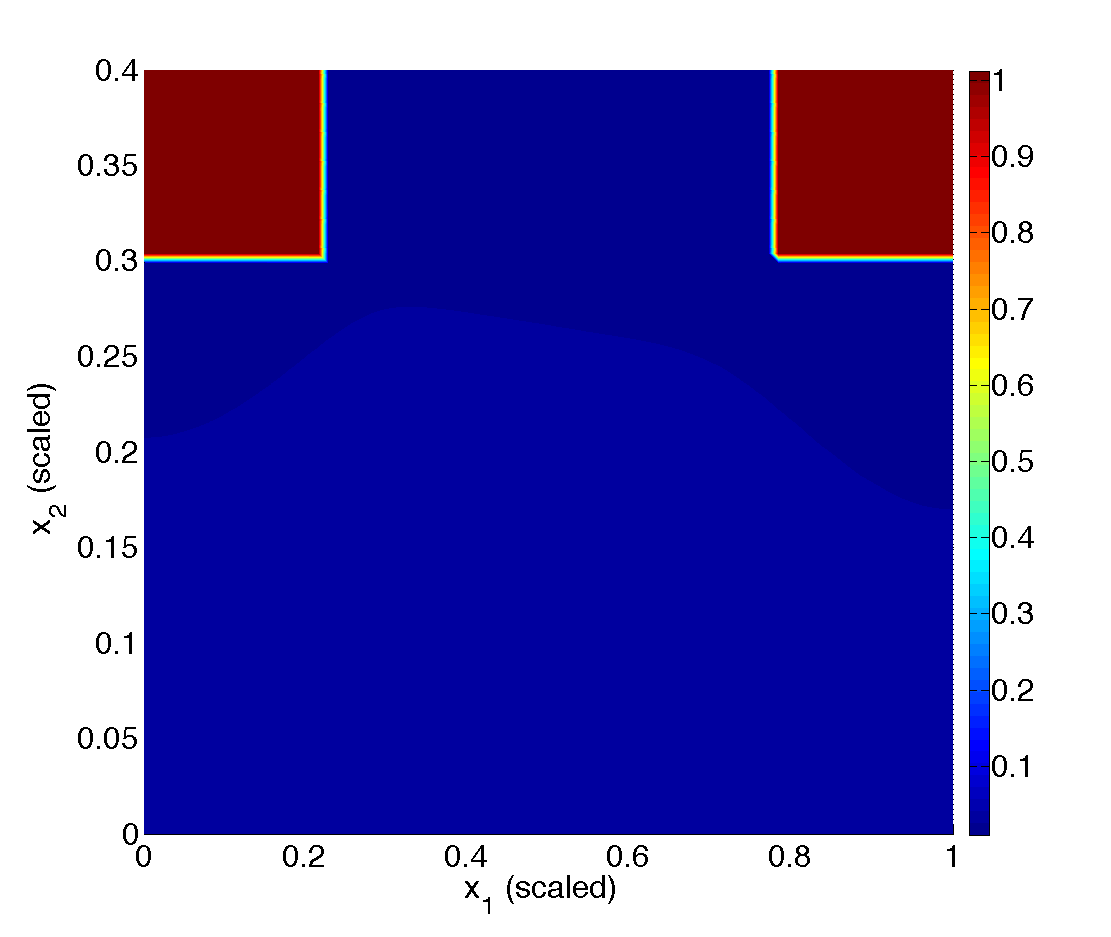}
   \label{fig:DopingoptDDgammaQDD}
   }
   \hspace*{0.3em}
   \subfigure[Total current density for optimized doping profile of DD ($\|n\n S\|_\infty = 0.702$).]{
   \includegraphics[scale=0.15]{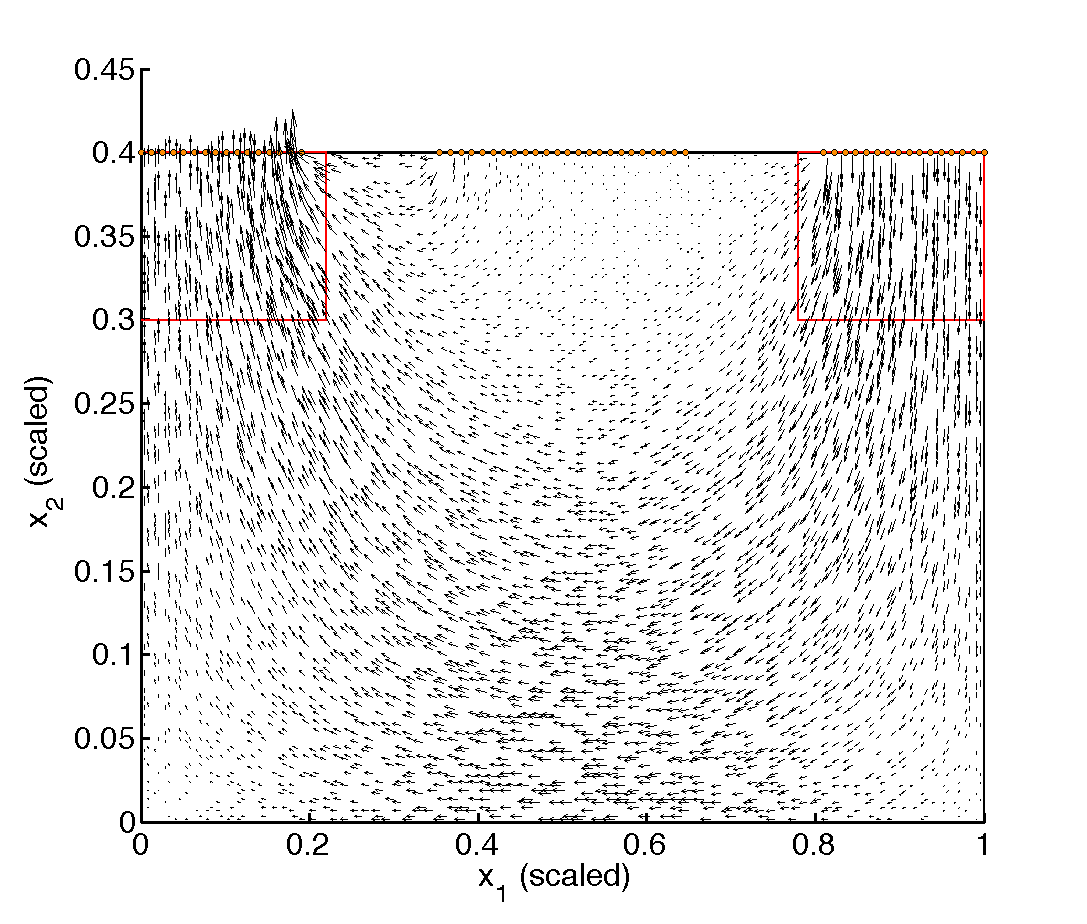}
   \label{fig:CurrentoptDDgammaQDD}
   }
          
  \caption{Total current densities (orange dots represent source and gate, the red rectangle the high doped $n_+$ region) and the reference and optimized doping profiles in the QDD and DD.  For the optimization we choose the regularization parameter $\gamma=1$.}
  \label{fig:gammaQDDopt}
\end{figure}

\begin{figure}[htbp]
\centering  
         \subfigure[Relative deviation of QDD optimized doping profile from reference doping profile.]{
         \includegraphics[scale=0.15]{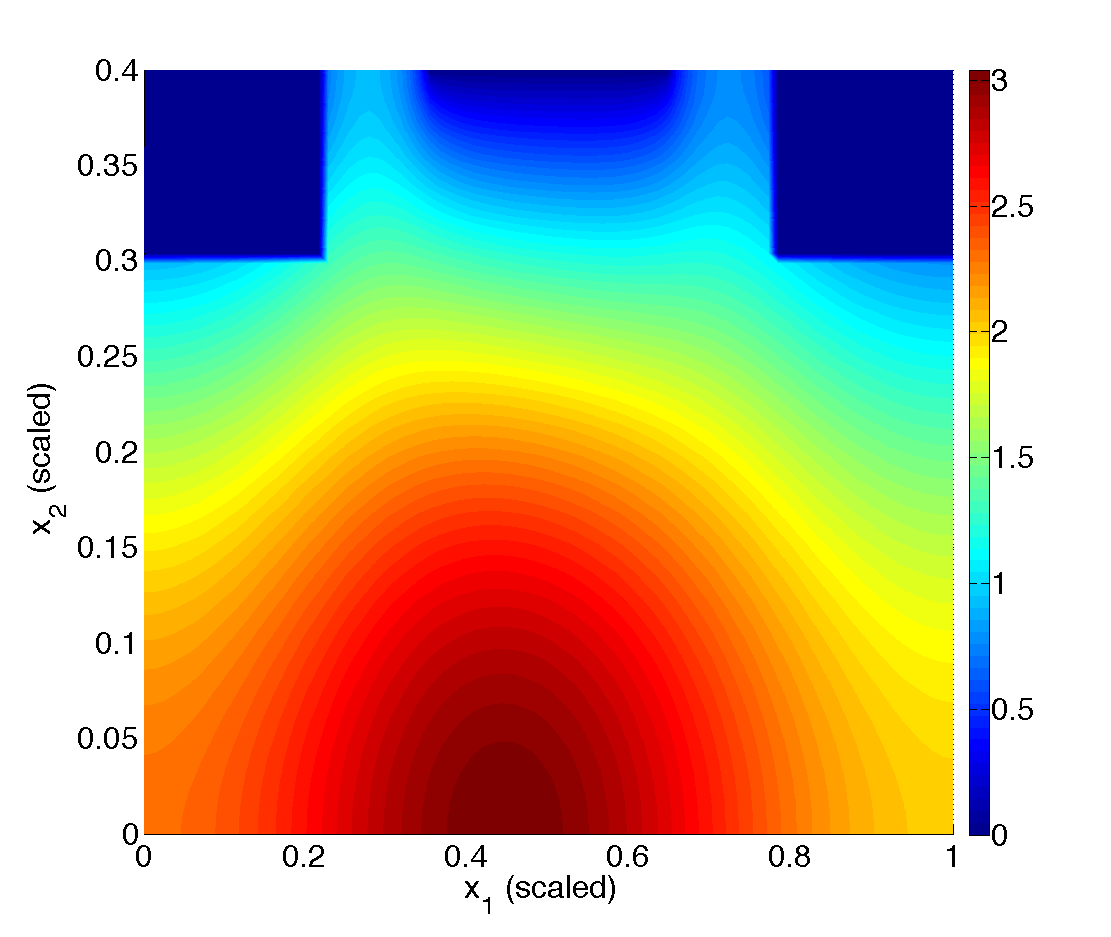}
         \label{fig:gammaQDDreldeviationDopingProfileQDDReference}
         }
         \hspace{0.3em}
   	 \subfigure[Relative deviation of QDD optimized doping profile from DD optimized doping profile.]{
   	 \includegraphics[scale=0.15]{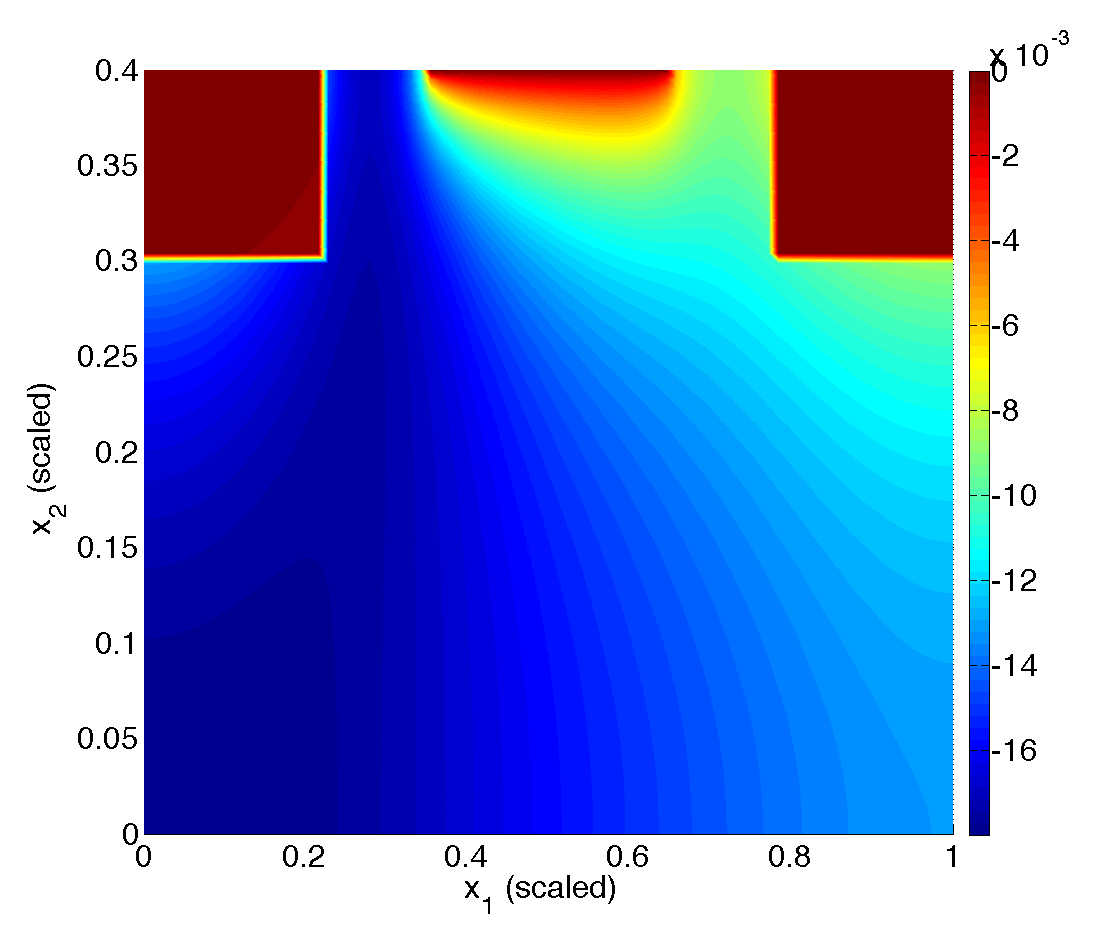}
   	 \label{fig:gammaQDDreldeviationDopingProfileQDDDD}
   	 }
         \subfigure[Relative deviation of the electron density for QDD optimized state from QDD reference state.]{
         \includegraphics[scale=0.15]{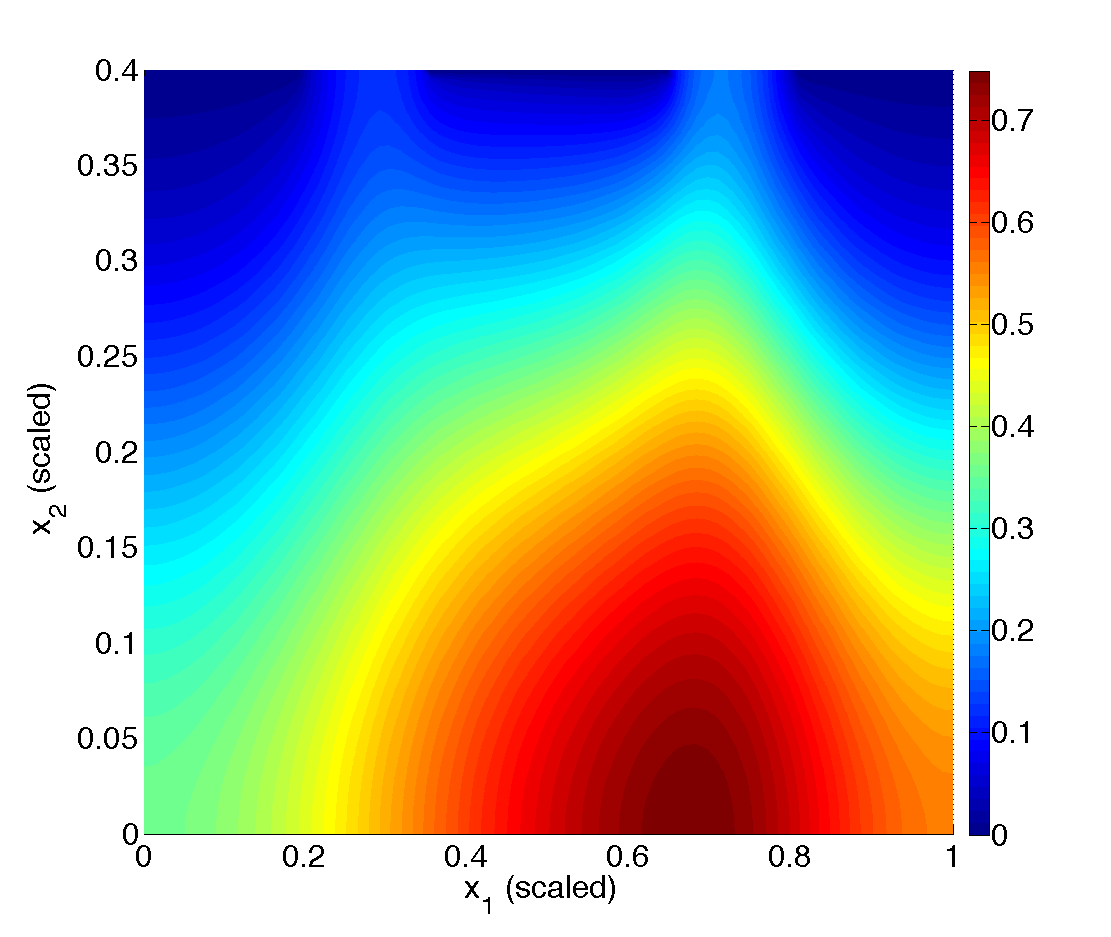}
         \label{fig:gammaQDDreldeviationDensityQDDReference}
         }
         \hspace{0.3em}
         \subfigure[Relative deviation of the electron density for QDD optimized state from DD optimized state.]{
         \includegraphics[scale=0.15]{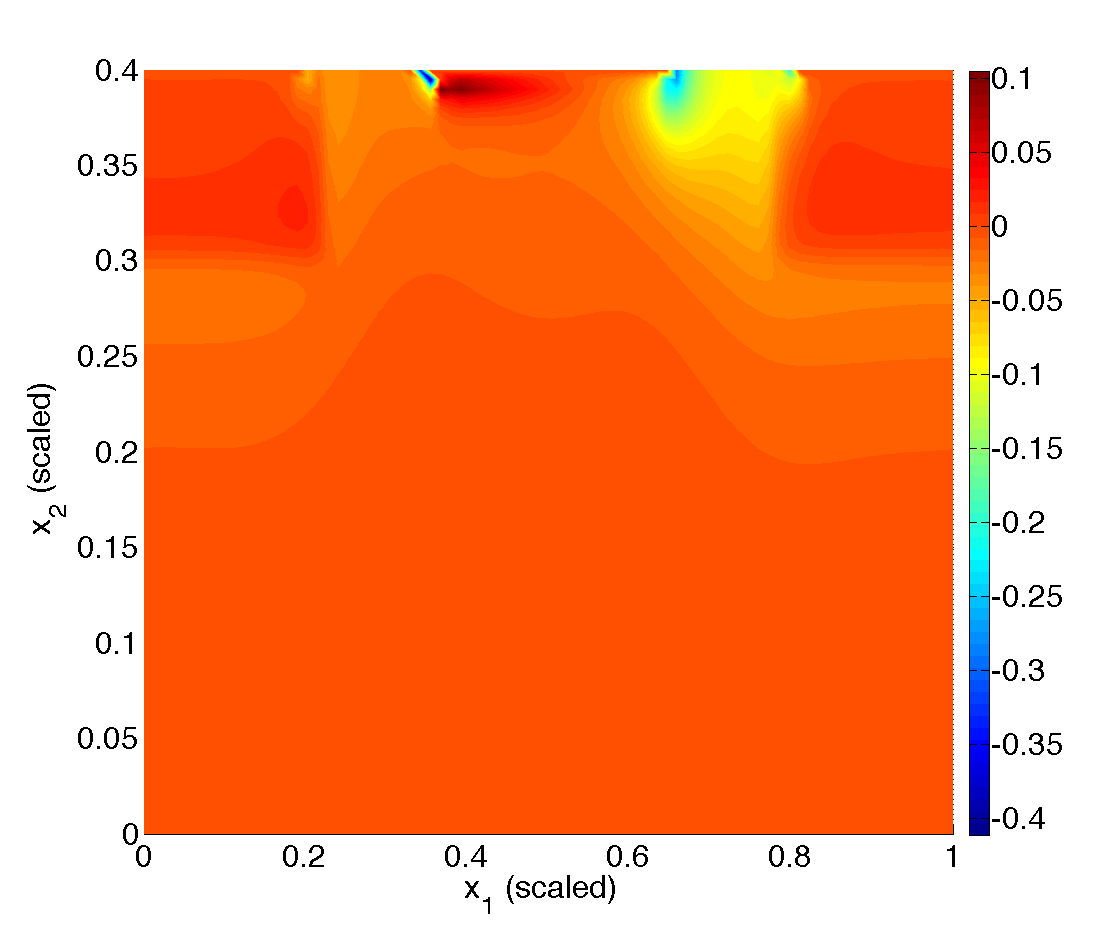}
         \label{fig:gammaQDDreldeviationDensityQDDDD}
         }
  \caption{Relative deviation of optimized doping profiles and electron densities to the reference solutions between QDD and DD.}
  \label{fig:gammaQDDreldeviation}
\end{figure}

Now we investigate the semi-classical limit $\e \rightarrow 0$ numerically in more detail. Let $\e_k = \e \. 10^{-n}, n=0,...,5$.
Since the solutions found by Algorithm \ref{alg:GradientOptimization} might only be local minimizers and minima instead of global ones, Proposition~\ref{convminima} and Corollary~\ref{convminimizers} do not necessarily require them to converge.
Nevertheless, from Fig.\,\ref{fig:gammaQDDConvergence} we see that this is the case and that they converge to the output of Algorithm~\ref{alg:GradientOptimization} for the classical model. This might be some indication that we have found global minimizers and minima.

\begin{figure}
\centering
      \subfigure{
      \includegraphics[scale=0.15]{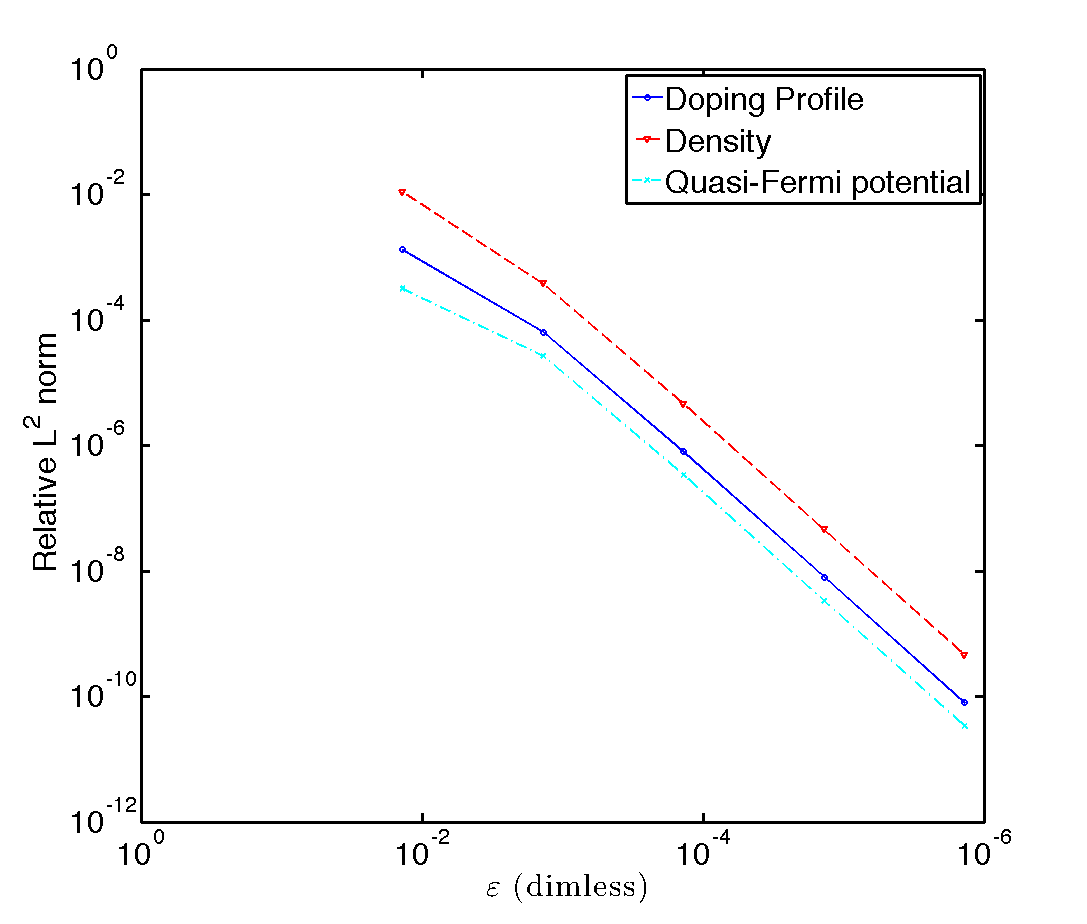}
      \label{fig:gammaQDDL2convergence}
      }
      \hspace{0.3em}
      \subfigure{
      \includegraphics[scale=0.15]{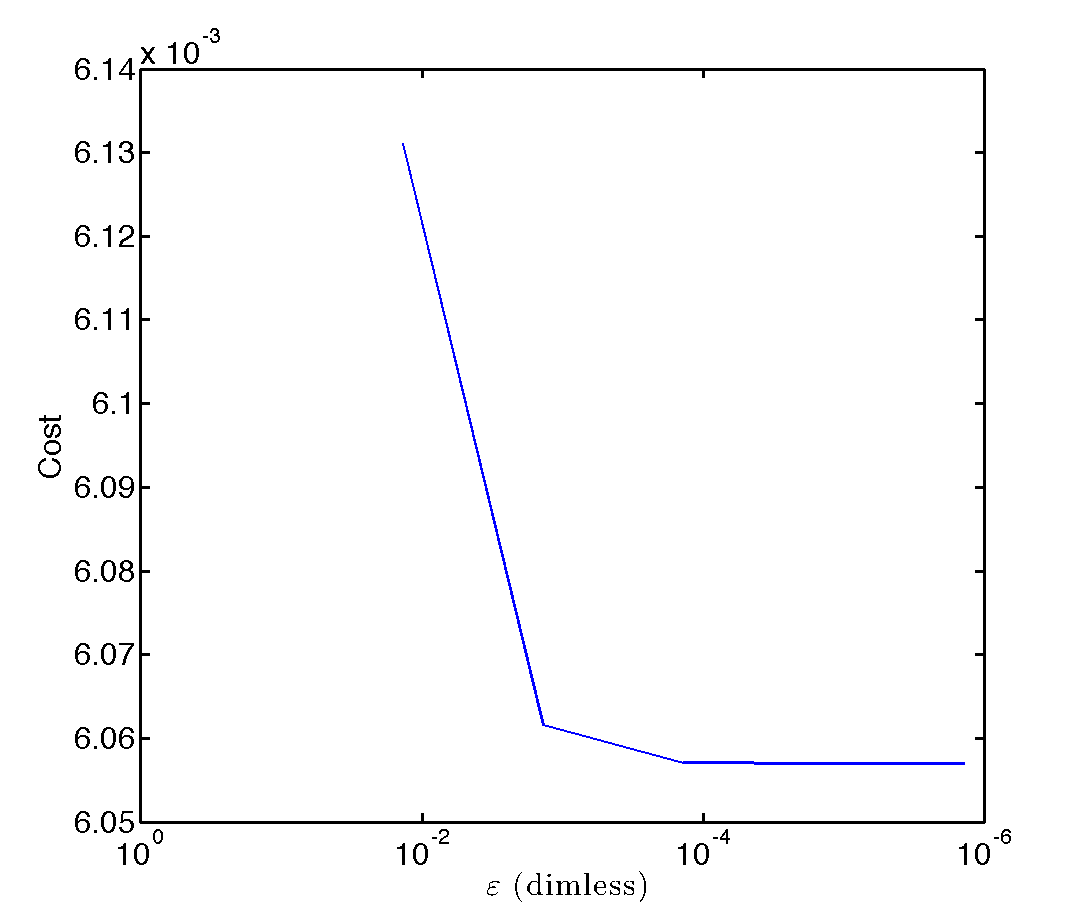}
      \label{fig:gammaQDDcostconvergence}
      }
  \caption{Relative $L^2$-norm of doping profile, electron density and quasi Fermi potential (left) and optimal cost (right) for $\e \rightarrow 0$. }
  \label{fig:gammaQDDConvergence}
\end{figure}

\section{Conclusions}
Using the concept of $\Gamma$-convergence we showed that the optimal semiconductor design problem for the quantum drift diffusion model is robust in the semiclassical limit. 
The numerical results even suggest that a stronger convergence might hold, at least in the case of unique solvability of the state equations. Hence, for small scaled Planck constants already the optimization of the more simple classical drift diffusion model gives adequate results also for the quantum model. This allows to  reduce significantly the numerical design costs.

\section*{Acknowledgements}
The authors acknowledge support from the DFG via SPP 1253/2.

\appendix

\section{}\label{proof:lemmawbound}

\begin{proof}[Proof of Lemma~\ref{lemmawbound}]
The idea of the proof is to derive uniform bounds for the sequences $(\rho_k)$, $(S_k)$ and $(V_k)$ in $L^{\infty}(\Omega)$. From this we derive a uniform bound for $\| S_k \|_{H^1(\Omega)}$, and by using the energies $E^{\e_k}_{S_k}(\rho)$ and $E^0_S(\rho)$ we derive the uniform boundedness of $\| \rho_k \|_{H^1(\Omega)}$.

So let $(\e_k)$ and $(\rho_k)$ be a sequences with the required properties. Using a cut-off operator, uniform bounds in $k$ for $\| S_k \|_{L^{\infty}(\Omega)}$ were shown in \cite{AbdUnt}. Consequently, 
\[
\| S_k \|_{L^{\infty}(\Omega)} \leq M_1 \quad \text{for all}\;\; k \in \nn,
\]
for some constant $M_1 > 0$.
This yields a uniform bound for $\inf_{\rho \in \Y_1}E^{\e_k}_{S_k}(\rho)$. As a consequence of the asymptotic growth $h(t) \in \mathcal{O}(t^{\frac{4}{3}})$ for $t \rightarrow \infty$, we further obtain the uniform boundedness of $\| \rho_k \|_{L^{10/3}(\Omega)}$, which, together with the boundedness of the sequence $(C_k)$ in $\mathcal{C}$, lead to a uniform bound for $(V_k)$ (cf.~\cite[Section~3.2]{MeyPhiTro}),
\[
 \| V_k \|_{H^1(\Omega)} + \| V_k \|_{L^{\infty}(\Omega)} \leq M_2\quad \text{for all}\;\; k \in \nn,
\]
for some constant $M_2 > 0$.

To derive a uniform lower bound of $\rho_k$, we multiply equation (\ref{QDD}a) with the test function $\test_k=\min\{0,\rho_k-\underline{\rho}\}$, where $\underline{\rho} > 0$ is a constant to be chosen appropriately. Integration by parts yields
\begin{align}
\label{QDD:uniformlowerboundw}
\e^2_k \int_{\Omega} | \n \test_k|^2 \ dx &= - \int_{ \{ \rho_k\, \leq \, \underline{\rho} \} }\rho_k\test_k \left( h(\rho_k^2)+ V_k - S_k \right) dx \\ 
& \leq - \int_{ \{ \rho_k\, \leq \, \underline{\rho} \} } \rho_k \test_k\left( h( \underline{\rho}^2) +  \overline{V} - \underline{S} \right)dx \nonumber 
\end{align}
with $ \overline{V} := M_2$ and $\underline S := \min\{ 0, \inf_{x \in \Omega, k \in \nn} S_k(x)\}>-\infty$, since $(S_k)$ is uniformly bounded in $L^{\infty}(\Omega)$. Now we may choose $\underline{\rho} > 0$ such that
\[
h( \underline{\rho}^2) +  \overline{V} - \underline{S} = 0
\]
holds. Therefore, the right-hand side of (\ref{QDD:uniformlowerboundw}) is equal to zero. Due to the boundary conditions for (\ref{QDD}a) we obtain
\[
\test_k \equiv 0 \quad \text{a.e.~in}\;\; \Omega,\; \forall\,k \in \nn\quad 
\Longrightarrow\quad  \rho_k \geq \underline{\rho}\quad \text{a.e.~in}\;\; \Omega,\;\forall\, k \in \nn. 
\]
Analogously, we prove the upper bound $\overline{\rho}$. Altogether we obtain
\[
\underline{\rho} \leq \rho_k \leq \overline{\rho} \quad\text{a.e.~in}\;\;\Omega,\;\;\text{for all}\;\; k \in \nn.
\]
Since $S_k = \Phi_S[\rho_k^2]$, we directly infer 
\[
 \| S_k \|_{H^1(\Omega)} \leq M_3 \quad\text{for all}\;\; k \in \nn,
\]
for some constant $M_3 > 0$.
 
 It remains to show the uniform $H^1(\Omega)$ bound for $(\rho_k)$. For some fixed $k \in \nn$ and $S_k \in \Y_3$ we define the auxiliary system
 \begin{subequations}
 \label{QDD:auxiliarysystem}
 \begin{align}
 \label{QDD:auxiliary1}
 0 & = h(\tilde{\rho}_k^2) + \tilde{V}_k - S_k, \\
 -\lambda^2 \Delta \tilde{V}_k & = \tilde{\rho}_k^2 - C
 \end{align}
 on $\Omega$ with boundary conditions
 \begin{align*}
 \tilde{\rho}_k = \rho_D, \;\; \tilde{V}_k = V_D \quad \text{ on } \Gamma_D, \quad \partial_\nu \tilde{\rho}_k = \partial_\nu \tilde{V}_k = 0 \quad \text{ on } \Gamma_N.
 \end{align*}
 \end{subequations}
This means that we solve the classical drift-diffusion model for $(\tilde{\rho}_k,\tilde{V}_k)$ with the quantum Fermi potential $S_k$.  Note that $(\tilde{\rho}_k, \tilde{V}_k, C)$ solves the system (\ref{QDD:auxiliarysystem}) weakly if and only if $\tilde{\rho}_k$ is the unique minimizer of $E^0_{S_k}$ in $\Y_1$.  From \cite{AbdUnt} we know that for each Fermi potential $S_k \in \Y_3$ there exists a minimizer $\tilde{\rho}_k$ in $\Y_1$. Furthermore, we can find some constant $K\ge 1$ depending on $\| S_k\|_{L^{\infty}(\Omega)}$ such that
\begin{eqnarray}\label{QDD:auxiliaryuniformbound}
 1/K \leq \tilde{\rho}_k \leq K\quad\text{for all}\;\; k \in \nn.
\end{eqnarray}
Recall that $\| S_k\|_{L^{\infty}(\Omega)}$ is uniformly bounded. Due to the fact that $\rho_k$ is a minimizer of $E^{\e_k}_{S_k}$, we may estimate
\begin{align*}
\e_k^2 \int_{\Omega} |\n \rho_k|^2\, dx + E^0_{S_k}(\rho_k) & = E^{\e_k}_{S_k}(\rho_k) \leq E^{\e_k}_{S_k}(\tilde{\rho}_k) \\
&\hspace*{-4em} = \e_k^2 \int_{\Omega} |\n \tilde{\rho}_k|^2\ dx + E^0_{S_k}(\tilde{\rho}_k) \leq \e_k^2 \int_{\Omega} |\n \tilde{\rho}_k|^2\ dx + E^0_{S_k}(\rho_k),
\end{align*}
thereby impliying that
\[
 \int_{\Omega} | \n \rho_k|^2 \ dx \leq \int_{\Omega} |\n \tilde{\rho}_k|^2\ dx.
\]
It remains to derive a uniform bound for $(\tilde{\rho}_n)$ in $H^1(\Omega)$. 

Due to (\ref{QDD:auxiliaryuniformbound}) and Assumption \ref{gammaQDD:assenthalpyasylimit}, we can define $h'_0 := \essinf_{x \in \Omega} h'(\tilde \rho_k^2(x))$ with $h'_0\in(0,\infty)$. We then differentiate (\ref{QDD:auxiliary1}) and multiply it with $\n \tilde \rho_k/2 \tilde \rho_k h'(\tilde \rho_k^2)$ (note that $\tilde \rho_k$ and $h'$ are uniformly bounded in $k$ away from zero). Integration yields
\begin{align*}
\int_{\Omega} | \n \tilde{\rho}_h|^2  dx & = \int_{\Omega}\frac{1}{2 \tilde \rho_k h'(\tilde \rho_k^2)} \n \tilde \rho_k\.\n(  S_k - \tilde V_k) \, dx \\
& \leq \mu \int_{\Omega}  \n\tilde \rho_k \. \n (S_k - \tilde{V}_k)\, dx \leq \mu\|\n \tilde \rho_k\|_{L^2(\Omega)} \|\n (S_k - \tilde{V}_k) \|_{L^2(\Omega)} \\
& \leq \mu \|\n \tilde{\rho}_k \|_{L^2(\Omega)} \left( \| \n S_k \|_{L^2(\Omega)} +  \|\n \tilde{V}_k \|_{L^2(\Omega)} \right),
\end{align*}
with $\mu=K/2 h'_0$ simply due to H\"older's inequality. With (\ref{QDD:auxiliaryuniformbound}) and standard elliptic estimates \cite{renardy}, we obtain the uniform boundedness of $\|\n \tilde{V}_k \|_{L^2(\Omega)}$. 

Along with the uniform bounds on $\| \n S_k \|_{L^2(\Omega)}$, this yields the existence of a constant $M_4 >0$ such that
\[
 \| \tilde{\rho}_k \|_{H^1(\Omega)} \leq M_4 \quad \text{for all}\;\; k \in \nn,
\]
thereby infering the existence of yet another constant $M_5 >0$ with
\[
\| \rho_k \|_{H^1(\Omega)} \leq M_5 \quad \text{for all}\;\; k \in \nn,
\]
which finally concludes to proof.
\end{proof}

\section{}\label{proof:QDD:lemmaexistenceconvergentsolutions}

We will use a variant of the implicit function theorem to facilitate the proof.

\begin{proposition}\label{appendix:prop:regular}
 Let $\e_0>0$ and $F\colon Y\times[0,\e_0)\to Z$ be a mapping, where $Y$ and $Z$ are Banach spaces. Suppose 
 \begin{enumerate}
  \item[\em (i)] there exists $y_0\in Y$ satisfying $F(y_0,0)=0$,
  \item[\em (ii)] $F$ is Fr\'echet differentiable in a neighborhood of $(y_0,0)$ such that the remainder term $R(\eta,\e) = F(y+\eta,\e)-F(y,\e)-D_yF(y,\e)[\eta]$ satisfies
  \[
   \|R(\eta,\e)-R(\xi,\e)\|_{Z} \le L\delta \|\eta-\xi\|_{Y}
  \]
  in a neighborhood of $(y_0,0)$ for any $\eta,\xi\in Y$ with $\|\eta\|_Y,\|\xi\|_Y\le \delta$ for some constants $L>0$ and $\delta>0$ independent of $\e$, and
  \item[\em (iii)] the derivative $D_y F(y_0,0)$ has a bounded inverse, i.e.,
  \[
   \|D_y F(y_0,0)^{-1}\eta\|_Y \le K\|\eta\|_{Z}\quad\text{for all}\;\;\eta\in Z.
  \]
  Then, for sufficiently small $\e>0$, the problem $F(y_\e,\e)=0$ has a unique solution $y_\e\in Y$ in an $\e$-independent neighborhood of $y_0$, and
  \[
   \|y_\e - y_0\| = \mathcal{O}(\e)\quad\text{for}\;\;\e\to 0.
  \]
 \end{enumerate}
\end{proposition}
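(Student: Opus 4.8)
The plan is to recast the equation $F(y_\e,\e)=0$ as a fixed-point problem for the increment $\eta:=y_\e-y_0$ and apply the Banach fixed-point theorem, following the classical proof of the implicit function theorem but tracking every smallness uniformly in $\e$. Writing $A:=D_yF(y_0,0)$, which by (iii) is boundedly invertible with $\|A^{-1}\eta\|_Y\le K\|\eta\|_Z$, I define the frozen-derivative Newton map
\[
\Phi_\e(\eta):=\eta-A^{-1}F(y_0+\eta,\e),\qquad \eta\in Y,
\]
so that $\eta$ is a fixed point of $\Phi_\e$ if and only if $y_0+\eta$ solves $F(\,\cdot\,,\e)=0$, the equivalence being exact because $A^{-1}$ is injective.

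For the contraction estimate I would expand $F$ around $(y_0,\e)$ using (ii), namely $F(y_0+\eta,\e)=F(y_0,\e)+D_yF(y_0,\e)[\eta]+R(\eta,\e)$, to obtain
\[
\Phi_\e(\eta)-\Phi_\e(\xi)=A^{-1}\bigl(A-D_yF(y_0,\e)\bigr)[\eta-\xi]-A^{-1}\bigl(R(\eta,\e)-R(\xi,\e)\bigr).
\]
The second term is controlled directly by (ii) as $\|A^{-1}(R(\eta,\e)-R(\xi,\e))\|_Y\le KL\delta\|\eta-\xi\|_Y$ on the ball $\overline{B}_\delta(0)$; since the remainder estimate has Lipschitz constant $L\delta$ proportional to the radius $\delta$, I fix $\delta$ small enough that $KL\delta\le\tfrac14$. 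The first term is controlled by the continuity of the derivative in $\e$, i.e.\ $\|A-D_yF(y_0,\e)\|\to 0$ as $\e\to 0$ (which accompanies the differentiability hypothesis (ii) and is $\mathcal{O}(\e^2)$ in the present application, where $D_\rho e_\e=D_\rho e_0-\e^2\Delta$), so I may then choose $\e$ small enough that $K\|A-D_yF(y_0,\e)\|\le\tfrac14$. Together these give a contraction constant $\le\tfrac12$ on $\overline{B}_\delta(0)$, uniformly for small $\e$.

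Next I would verify the self-mapping property. Since $R(0,\e)=0$ we have $\Phi_\e(0)=-A^{-1}F(y_0,\e)$, and because $F(y_0,0)=0$ the continuity of $\e\mapsto F(y_0,\e)$ gives $\|F(y_0,\e)\|_Z\to 0$; hence $\|\Phi_\e(0)\|_Y\le K\|F(y_0,\e)\|_Z\le\delta/2$ for $\e$ small. Combining this with the contraction bound yields $\|\Phi_\e(\eta)\|_Y\le\tfrac12\|\eta\|_Y+\delta/2\le\delta$ for $\eta\in\overline{B}_\delta(0)$, so $\Phi_\e$ maps the complete metric space $\overline{B}_\delta(0)$ into itself. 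The Banach fixed-point theorem then produces, for every sufficiently small $\e$, a unique $\eta_\e\in\overline{B}_\delta(0)$, equivalently a unique solution $y_\e=y_0+\eta_\e$ of $F(\,\cdot\,,\e)=0$ in the $\e$-independent ball $\overline{B}_\delta(y_0)$.

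Finally, the convergence rate follows from the fixed-point identity itself: $\|\eta_\e\|_Y=\|\Phi_\e(\eta_\e)\|_Y\le\tfrac12\|\eta_\e\|_Y+K\|F(y_0,\e)\|_Z$ gives $\|y_\e-y_0\|_Y=\|\eta_\e\|_Y\le 2K\|F(y_0,\e)\|_Z$, and invoking the Fréchet differentiability of $F$ at $(y_0,0)$ together with $F(y_0,0)=0$ shows $\|F(y_0,\e)\|_Z=\mathcal{O}(\e)$, whence $\|y_\e-y_0\|_Y=\mathcal{O}(\e)$. The \emph{main obstacle} is not any single estimate but making the two smallness mechanisms cooperate: the $\e$-independent smallness of the remainder forces us to fix a small radius $\delta$ first, while the $\e$-driven smallness of $\|A-D_yF(y_0,\e)\|$ and of $\|F(y_0,\e)\|_Z$ only kicks in afterwards on that already-fixed ball, so one must order these choices correctly to keep the contraction radius independent of $\e$.
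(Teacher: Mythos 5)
The paper never proves this proposition: it is stated in Appendix~B as ``a variant of the implicit function theorem'' and then applied to Lemma~\ref{QDD:lemmaexistenceconvergentsolutions}, so the only benchmark is the standard argument the authors implicitly invoke. Your proof is exactly that argument --- frozen-derivative Newton map $\Phi_\e(\eta)=\eta-A^{-1}F(y_0+\eta,\e)$, Banach fixed point on an $\e$-independent ball, rate read off from the fixed-point identity --- and its architecture is correct: the equivalence of fixed points with solutions, the splitting of $\Phi_\e(\eta)-\Phi_\e(\xi)$ into a derivative-perturbation term and a remainder term, the ordering of choices ($\delta$ first, then $\e$), the self-mapping estimate, and the bound $\|y_\e-y_0\|_Y\le 2K\|F(y_0,\e)\|_Z=\mathcal O(\e)$ are all sound.

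The one step you assert rather than prove is $\|A-D_yF(y_0,\e)\|_{\mathcal{L}(Y,Z)}\to 0$ as $\e\to 0$, which you claim ``accompanies'' hypothesis~(ii). It does not: Fr\'echet differentiability of $F$ at every point of a neighborhood of $(y_0,0)$ is strictly weaker than $C^1$ regularity and does not imply continuity of $\e\mapsto D_yF(y_0,\e)$, so as written this is an appeal to a hypothesis that is not there. The gap is fillable from what \emph{is} assumed, but it requires a short computation rather than a citation: joint differentiability at $(y_0,0)$ together with $F(y_0,0)=0$ gives $F(y_0+h,\e)=Ah+b\e+r(h,\e)$ with $\|r(h,\e)\|_Z\le\omega(\|h\|_Y+\e)\,(\|h\|_Y+\e)$ and $\omega(s)\to 0$, while (ii) at the base point $(y_0,\e)$ gives $F(y_0+h,\e)=F(y_0,\e)+D_yF(y_0,\e)h+R(h,\e)$ with $\|R(h,\e)\|_Z\le L\delta\|h\|_Y$; subtracting the two expansions yields $(D_yF(y_0,\e)-A)h=r(h,\e)-r(0,\e)-R(h,\e)$, and taking the supremum over $\|h\|_Y=\delta$ with $\e\le\delta$ gives
\[
 \|D_yF(y_0,\e)-A\|_{\mathcal{L}(Y,Z)}\le 2\omega(2\delta)+\omega(\delta)+L\delta,
\]
which is all your contraction needs --- you never need the full limit, only smallness of this quantity after $\delta$ has been fixed. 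Alternatively one can observe, as you do parenthetically, that in the paper's application the $\e$-dependence is explicit and $D_yF(y,\e)-D_yF(y,0)$ is $\e^2$ times a fixed bounded bilinear form, so the difference is $\mathcal O(\e^2)$ in operator norm; but then this continuity should be stated as an explicit hypothesis of the proposition rather than attributed to (ii). Note finally that both you and the paper tacitly read (ii) as holding for \emph{every} sufficiently small $\delta$ with one fixed $L$ --- this reading is needed when you ``fix $\delta$ small enough that $KL\delta\le\tfrac14$,'' and it is consistent with how the paper verifies (ii) for a quadratic remainder in Appendix~B.
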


\begin{proof}[Proof of Lemma~\ref{QDD:lemmaexistenceconvergentsolutions}]  Let $\rho_0\in \Y_1$ be an isolated solution of the classical drift-diffusion equations with a fixed doping profile $C\in \mathcal{C}$. Further, let $\e_0>0$, and
\[
 Y := [\Y_0]^3,\qquad Z:= [H_0^1(\Omega\cup\Gamma_N)^*]^3\times [H^{\frac{1}{2}}(\Gamma_D)]^3.
\]
%
Consider the operator equation
\begin{equation}\label{appendix1:operator}
 F(y_{\e},\e)=0\quad\text{in}\;\; Z,
\end{equation}
where $y_\e=(\rho_\e,V_\e,S_\e)$ and $F\colon Y\times[0,\e_0) \to Z$ is defined by
\begin{align*}
 \langle F_1(y,\e),\test_1\rangle &= \e^2\int_\Omega \nabla\rho\.\nabla\test_1\,dx + \int_\Omega \rho\left(h(\rho^2)+V-S\right)\test_1\,dx \\
 \langle F_2(y,\e),\test_2\rangle &= \lambda^2\int_\Omega \nabla V\.\nabla\test_2\,dx - \int_\Omega (\rho^2-C)\test_2\,dx \\
 \langle F_3(y,\e),\test_3\rangle &= \int_\Omega \rho^2\nabla S\.\nabla\test_3\,dx
\end{align*}
for all $\test_1,\test_2,\test_3\in H_0^1(\Omega\cup\Gamma_N)$, and
\[
 F_4(y,\e) = \tau_D(\rho-\rho_D),\quad F_5(y,\e) = \tau_D(V-V_D),\quad F_6(y,\e) = \tau_D(S-S_D)
\]
where $\tau_D\colon H_0^1(\Omega\cup\Gamma_N)\to H^{\frac{1}{2}}(\Gamma_D)$ denotes the trace operator onto $\Gamma_D\subset \Gamma$. 

Notice that the operator equation above is equivalent to the one given in (\ref{weakformulation1}), and the equation with $\e=0$  corresponds to the classical drift-diffusion equations. Hence, $y_0=(\rho_0, V_0=\Phi_V[\rho_0^2-C],S_0=\Phi[\rho_0^2])\in Y$ satisfies $F(y_0,0)=0$ in $Z$. We also recall from \cite{UntVol} the Fr\'echet differentiability of the operator $F$ in a neighborhood of $(y_0,0)$. Furthermore, due to Assumption~\ref{gammaQDD:assclassicalsolutions} we deduce that the derivative $D_y F(y_0,0)$ has a bounded inverse, i.e.,
\[
 \|D_y F(y_0,0)^{-1}\eta\|_\Y \le K\|\eta\|_{Z}\quad\text{for all}\;\;\eta\in Z.
\]
The remainder term $R$ has the form
\begin{align*}
 \langle R(\eta,\e),\test\rangle &= \int_\Omega \left[2\rho_0 \eta_\rho\nabla \eta_S + \eta_\rho^2\,\nabla(S+\eta_\rho)\right]\.\nabla\test_1\,dx \\
 &\hspace*{-3em}+ \int_\Omega \left[\rho_0 r_\rho(\eta_\rho) + \eta_\rho\big(h(\rho_0 + \eta_\rho) - h(\rho_0) + \eta_V - \eta_S\big)\right]\test_2\,dx - \int_\Omega \eta_\rho^2\test_3\,dx,
\end{align*}
where $r_\rho$ is as defined in Assumption~\ref{gammaQDD:assenthalpyasylimit}. It remains an easy exercise to check that
\[
 \|R(\eta,\e)-R(\xi,\e)\|_{Z} \le L\delta \|\eta-\xi\|_{\Y}
\]
for any $\eta,\xi\in \Y$ with $\|\eta\|_\Y,\|\xi\|_\Y\le \delta$ for some constants $L>0$ and $\delta>0$. Observe that these constants are independent of $\e$, since $R$ is independent of $\e$.  We conclude the proof by applying Proposition~\ref{appendix:prop:regular}.
\end{proof}

\bibliography{biblio}
\bibliographystyle{abbrv}

\end{document}